\theoremstyle{plain}
\newtheorem{theorem}[subsection]{Theorem}
\newtheorem{proposition}[subsection]{Proposition}
\newtheorem{corollary}[subsection]{Corollary}	
\newtheorem{lemma}[subsection]{Lemma}
\theoremstyle{definition}
\newtheorem{definition}[subsection]{Definition}
\theoremstyle{remark}
\newtheorem{remark}[subsection]{Remark}
\newtheorem{example}[subsection]{Example}
\DeclareMathOperator{\id}{id}
\DeclareMathOperator{\Ker}{Ker}
\DeclareMathOperator{\Coker}{Coker}
\DeclareMathOperator{\Ima}{Im}
\DeclareMathOperator{\Der}{Der}
\DeclareMathOperator{\InnDer}{InnDer}
\DeclareMathOperator{\HH}{H}
\newcommand{\K}{\ensuremath{\mathbb{K}}}
\newcommand{\Lie}{\ensuremath{\mathsf{Lie}}}
\newcommand{\HomLie}{\ensuremath{\mathsf{HomLie}}}
\newcommand{\HomSet}{\ensuremath{\mathsf{HomSet}}}
\newcommand{\Set}{\ensuremath{\mathsf{Set}}}
\newcommand{\rcong}{\mathbin{\rotatebox[origin=c]{-90}{$\cong$}}}
\begin{document}

\title{On the capablility of Hom-Lie algebras}

\author{J.M.~Casas}
\author{X.~García-Martínez}

\email{jmcasas@uvigo.es}
\email{xabier.garcia.martinez@uvigo.gal}

\address[José Manuel Casas]{Universidade de Vigo, Dpto.\ Matemática Aplicada I, E--36005 Pontevedra, Spain}
\address[Xabier García-Martínez]{Universidade de Vigo, Departamento de Matemáticas, Esc.\ Sup.\ de Enx.\ Informática, Campus de Ourense, E--32004 Ourense, Spain\newline
and\newline
Faculty of Engineering, Vrije Universiteit Brussel, Pleinlaan 2, B--1050 Brussel, Belgium}

\thanks{This work was supported by Ministerio de Economía y Competitividad (Spain), with grant number MTM2016-79661-P. The second author is a Postdoctoral Fellow of the Research Foundation–Flanders (FWO)}

\begin{abstract}
A Hom-Lie algebra $(L, \alpha_L)$ is said to be \emph{capable} if there exists a Hom-Lie algebra $(H, \alpha_H)$ such that $L \cong H/Z(H)$. We obtain a characterisation of capable Hom-Lie algebras involving its epicentre and we use this theory to further study the six-term exact sequence in homology and to obtain a Hopf-type formulae of the second homology of perfect Hom-Lie algebras. 
\end{abstract}

\subjclass[2010]{17B61}
\keywords{Hom-Lie algebra; non-abelian tensor product; capable Hom-Lie algebra; exterior centre; epicentre}

\maketitle

%%%%%%%%%%%%%%%%%%%%%%%%%%%%%%%%%%%%%%%%%%%%%%%%%%%%%%%%%%%%
%%%%%%%%%%%%%%%%%%%%%%%%%%%%%%%%%%%%%%%%%%%%%%%%%%%%%%%%%%%%%%%%%%%%%%%%%%%%%%%%%%%%%%%%%%%%%%%%%%%%%%%%%%%%%%%%%%%%%%%%%%%%%%%%%%%%
\section{Introduction}

Hom-Lie algebras were introduced in~\cite{HaLaSi} to study the deformations of the Witt and Virasoro algebras mainly motivated by the study of quantum deformations and discretisation of vector fields via twisted derivations. Their algebraic structure consists in an anticommutative algebra satisfying a twisted version of the Jacobi identity (see Definition~\ref{def}). Since then, many authors extended this idea to many other different algebraic structures, becoming a very prolific research area.

From a categorical-algebraic point of view, the category of Hom-Lie algebras represents a very interesting example that it is worth comprehending better. This may get us closer to understand categorically some complicated algebraic concepts. In fact, the category of Hom-Lie algebras (even adding the property of being multiplicative), it is known to be a semi-abelian category~\cite{JaMaTh} which does not satisfy many of the stronger categorical-algebraic conditions, such as \emph{locally algebraically cartesian closed}~\cite{GrayPhD}, \emph{action representable}~\cite{BoJaKe}, \emph{algebraic coherence}~\cite{acc} or \emph{Normality of Higgins commutators}~\cite{CGrayVdL1}. On the other hand, if we ask the twist to be an automorphism, then it satisfies them all, since it becomes a category of Lie objects over some monoidal category~\cite{GTVV, GaVa2, GaVa, GoVe, GrayRep}. Therefore, Hom-Lie algebras become a \emph{not-so-complicated} example of a \emph{bad-behavioured} semi-abelian algebraic variety. For instance, Hom-Lie algebras were crucial to understand the conditions needed in a semi-abelian category to have a coherent universal central extension theory~\cite{CaVdL}.

In this article, we will study \emph{capable Hom-Lie algebras}, an idea that comes from the concept of capable groups~\cite{Bae}. A group~$G$ is \emph{capable} if there exists a group~$H$ such that~$G \cong H/Z(H)$. In~\cite{BeFeSc}, the \emph{epicentre} was introduced to characterise capable groups: a group is capable if and only if its epicentre is trivial. Later on, a very interesting relation of the epicentre with the non-abelian exterior square was found~\cite{Ell3}.

Capable Lie algebras were introduced in~\cite{SaAlMo} and further studied in~\cite{KhKuLa, NiPaRu}. Nevertheless, the generalisation to the Hom-Lie case is non-trivial due to the loss of some interesting properties, such as the universal central extension condition~\cite{CaVdL}, the difference between the Higgins and Huq commutator, or the fact that it is not known whether its standard homology theory can be obtained from a $Tor$ functor. 

The present manuscript is organised as follows: In Section~\ref{S:prel} we recall several known concepts. Section~\ref{S:tensor} is devoted to introduce the notion of non-abelian exterior product of Hom-Lie algebras, and to find its relation with homology. In Section~\ref{S:exactseq} we obtain a six-term exact sequence involving homology and the non-abelian tensor product, that will be useful in Section~\ref{S:capable} where the capability condition is studied. We define the tensor and exterior centres and relate them with the epicentre of a Hom-Lie algebra. Then, we obtain a characterisation of capable Hom-Lie algebras in terms of their epicentre. Finally, we use the work previously done to further study the six-term exact sequence in homology and to obtain a Hopf-type formulae of the second homology of perfect Hom-Lie algebras.

%%%%%%%%%%%%%%%%%%%%%%%%%%%%%%%%%%%%%%%%%%%%%%%%%%%%%%%%%%%%%%%%%%%%%%%%%%%%%%%%%%%%%%%%%%%%%%%%%%%%%%%%%%%%%%%%%%%%%%%%%%%%%%%%%%%%
\section{Hom-Lie algebras}\label{S:prel}
Throughout this paper we fix $\mathbb{K}$ as a ground field. Vector spaces are considered over $\mathbb{K}$ and linear maps are $\mathbb{K}$-linear maps. We write $\otimes$ (resp.~$\wedge$) for the tensor product $\otimes_\mathbb{K}$ (resp. exterior product $\wedge_\mathbb{K}$ ) over $\mathbb{K}$.

We begin by reviewing some terminology and recalling already known notions used in the paper. We mainly follow~\cite{HaLaSi, JiLi, MaSi2, Yau2}, although with some modifications.

\subsection{Basic definitions}

\begin{definition}\label{def}
	A \emph{Hom-Lie algebra} $(L, \alpha_L)$ is a non-associative algebra $L$ together with a linear
	map $\alpha_L \colon L \to L$ (sometimes called \emph{twist}) satisfying
	\begin{align*}
		&[x,y] = - [y,x], & \text{ (skew-symmetry)}\\
		& \big[\alpha_L(x),[y,z]\big]+\big[\alpha_L(z),[x,y]\big]+\big[\alpha_L(y),[z,x]\big]=0, & \text{(Hom-Jacobi identity)}
	\end{align*}
	for all $x, y, z \in L$.
\end{definition}

In this paper we will only consider the so called \emph{multiplicative Hom-Lie algebras}, i.e., Hom-Lie algebras $(L, \alpha_L)$ such that $\alpha_L$ preserves the product $\alpha_L[x,y]=[\alpha_L(x), \alpha_L(y)]$ for all $x, y \in L$. Nevertheless, as it is standard in the literature, we will omit the word \emph{multiplicative}.	

\begin{example}\label{ejemplo 1} \hfill
	\begin{enumerate}
		\item[a)] Taking $\alpha_L = {\id}_L$, we recover exactly Lie algebras.
		
		\item[b)] Let $V$ be a vector space and $\alpha_V\colon V\to V$ be a linear map, then the pair~$(V, \alpha_V)$ is called \emph{Hom-vector space}. A Hom-vector space $(V,\alpha_V)$ together with the trivial product $[-,-]$ (i.e., $[x,y] = 0$ for any $x,y \in V$) is a Hom-Lie algebra $(V, \alpha_V)$, which is called \emph{abelian Hom-Lie algebra}.
		
		\item[c)] Let $L$ be a Lie algebra, and $\alpha_L\colon L \to L$ be a Lie algebra endomorphism. Then $(L, \alpha_L)$ is a Hom-Lie algebra with the bracket defined by $[x,y]_{\alpha_L} = [\alpha_L(x),\alpha_L(y)]$, for all $x, y \in L$~\cite{Yau2}.
		
		\item[d)] Any Hom-associative algebra $(A, \alpha_A)$ can be endowed with a structure of Hom-Lie algebra by means of the bracket $[a,b]=ab-ba,$ for $a, b \in A$~\cite{MaSi2}.
	\end{enumerate}
\end{example}

Hom-Lie algebras are the objects of the category $\HomLie$, whose morphisms are Lie algebra homomorphisms $f\colon L \to L'$ such that $f \circ \alpha_L = \alpha_{L'} \circ f$. Clearly there is a full embedding ${\Lie} \hookrightarrow{\HomLie}$, $L\mapsto (L,{\id}_L)$, where ${\Lie}$ denotes the category of Lie algebras.

Since $\HomLie$ is a variety of $\Omega$-groups in the sense of Higgins~\cite{Hig2} it is a semi-abelian category, therefore the $3 \times 3$-lemma and the Snake lemma automatically hold~\cite{BoBo}. Below we explicitly study some categorical-algebraic notions in the particular case of the category $\HomLie$ (the general definitions in the semi-abelian context can be found in~\cite{BoJaKe2, BoJaKe, Bou, BoJa3, Jan}).

\begin{definition}
	A \emph{subalgebra} $(H,\alpha_{H})$ of a Hom-Lie algebra $(L, \alpha_L)$ is a vector subspace $H$ of $L$, which is closed under the bracket and invariant under $\alpha_{L}$. In such a case we may write $\alpha_{L\mid}$ for $\alpha_{H}$.
	A subalgebra $(H,\alpha_{L\mid})$ of $(L, \alpha_L)$ is said to be an \emph{ideal} if $[x, y] \in H$ for any $x \in H$, $y\in L$. A Hom-Lie algebra $L$ is called \emph{abelian} if $[x, y] = 0$ for all $x, y \in L$. 
	
	Let $(H,\alpha_{L\mid})$ and $(K,\alpha_{L\mid})$ be ideals of a Hom-Lie algebra $(L,\alpha_L)$. The \emph{(Higgins) commutator} of $(H,\alpha_{L\mid})$ and $(K,\alpha_{L\mid})$, denoted by $([H,K], \alpha_{{L\mid}})$, is the subalgebra of $(L,\alpha_L)$ spanned by the elements $[h,k]$, $h \in H$, $k \in K$. Note that it is not necessarily an ideal, so Huq and Higgins commutators do not always coincide. The idea behind this claim is that the Hom-Jacobi identity may not help to break $\big[x, [h, k]\big]$ into a bracket of elements from $H$ and $K$. A Hom-Lie algebra $(L, \alpha_{L})$ is called \emph{perfect} if $L = [L, L]$. Note that $[L, L]$ is always an ideal. The quotient $\left( \frac{L}{[L,L]}, \overline{\alpha}_L \right)$ is an abelian object in $\HomLie$ and it is called the \emph{abelianisation} of $(L, \alpha_L)$ which we will denote by $\left(L^{\rm ab}, \alpha_{L^{\rm ab}} \right)$. 
\end{definition}

\begin{definition}[\cite{CaGM}]
	The \emph{centre} of a Hom-Lie algebra $(L,\alpha_L)$ is the ideal
	\[
	Z(L) = \{ x \in L \mid [\alpha^k(x), y] =0 \ \text{ for all}\ y \in L, k \in \mathbb{N}\}.
	\]
\end{definition}

\begin{remark}
	When $\alpha_L \colon L \to L$ is a surjective endomorphism, then  we have that $Z(L) = \{ x \in L \mid [x, y] = 0 \}$.
\end{remark}

\begin{definition}[\cite{CaInPa}]\label{alfacentral}
	A short exact sequence of Hom-Lie algebras 
	\[
	0 \to (M, \alpha_M) \stackrel{i} \to (K,\alpha_K) \stackrel{\pi} \to (L, \alpha_L) \to 0
	\]
	is said to be \emph{central} if $[M, K] = 0 $. Equivalently, $M \subseteq Z(K)$.
\end{definition}

Following~\cite{Yau2}, the \emph{homology with trivial coefficients} of a Hom-Lie algebra $(L, \alpha_{L})$ is the homology of the complex $(C_n^{\alpha}(L), d_n), n \geq 1$, where
$C_{n}^{\alpha}\left( L\right) = \Lambda^n L$ and $d_{n}\colon C_{n}^{\alpha}\left( L\right) \longrightarrow C_{n-1}^{\alpha}\left( L\right)$ is given by
\begin{multline*}
	d_{n}\left( x_{1}\wedge\cdots\wedge x_{n}\right) \\ = \underset{1\leqslant i<j\leqslant n}{\sum}\left[ x_{i},x_{j}\right]
	\wedge\alpha_{L}\left( x_{1}\right) \wedge\cdots\wedge\widehat{\alpha_{L}\left( x_{i}\right) }\wedge\cdots\wedge\widehat{\alpha_{L}\left(
		x_{j}\right) }\wedge\cdots\wedge\alpha_{L}\left( x_{n}\right)
\end{multline*}
A routine check shows that $\HH_{0}^{\alpha}(L, \alpha_L) = \K$ and $\HH_{1}^{\alpha}\left( L, \alpha_L \right) =\frac{L}{[L,L]}$. 	

%%%%%%%%%%%%%%%%%%%%%%%%%%%%%%%%%%%%%%%%%%%%%%%%%%%%%%%%%%%%%%%%%%%%%%%%%%%%%%%%%%%%%%%%%%%%%%%%%%%%%%%%%%%%%%%%%%%%%%%%%%%%%%%%%%%%

\subsection{Crossed modules}

\begin{definition}[\cite{CaKhPa}] \label{action}
	Let $(L,\alpha_L)$ and $(M, \alpha_M)$ be Hom-Lie algebras. A \emph{Hom-action} of $(L,\alpha_L)$ on $(M, \alpha_M)$ is a linear map $L \otimes M \to M,$ $x \otimes m\mapsto {}^xm$, satisfying the following properties:
	\begin{enumerate}
		\item[a)] ${}^{[x,y]} \alpha_M(m) = {}^{\alpha_L(x)}({}^y m) - {}^{\alpha_L(y)} ({}^x m)$,
		\item [b)] ${}^{\alpha_L(x)} [m,m'] = [{}^x m, \alpha_M(m')]+[\alpha_M (m), {}^x m']$,
		\item [c)] $\alpha_M({}^x m) = {}^{\alpha_L(x)} \alpha_M(m)$
	\end{enumerate}
	for all $x, y \in L$ and $m, m' \in M$.
	
	A Hom-action is called \emph{trivial} if ${}^xm=0$ for all $x\in L$ and $m\in M$.
\end{definition}

\begin{remark}
	If $(M, \alpha_M)$ is an abelian Hom-Lie algebra enriched with a Hom-action of $(L,\alpha_L)$, then $(M, \alpha_M)$ is a \emph{Hom-module} over $(L,\alpha_L)$~\cite{Yau}.
\end{remark}

\begin{example}\label{ejemplo 2} \hfill
	\begin{enumerate}
		\item[a)] Let $(H, \alpha_{H})$ be a subalgebra and $(K,\alpha_{K})$ an ideal of $(L,\alpha_{L})$. Then there exists a Hom-action of $(H,\alpha_{H})$ on $(K,\alpha_{K})$ given by the product in~$L$. In particular, there is a Hom-action of $(H,\alpha_{H})$ on itself given by the product in $H$.
		
		\item[b)] Let $L$ and $M$ be Lie algebras. Any Lie action of $L$ on $M$ (see e.g.~\cite{Ell1}) defines a Hom-action of $(L, {\id}_{L})$ on $(M, {\id}_{M})$.
		
		\item[c)] Let $L$ be a Lie algebra and $\alpha \colon L \to L$ be an endomorphism. Let $M$ be an $L$-module satisfying the condition ${}^{\alpha(x)}m = {}^x m$, for all $x \in L$, $m \in M$. Then~$(M, {\id}_M)$ is a Hom-module over the Hom-Lie algebra $(L, \alpha)$ considered in Example~\ref{ejemplo 1}~c).
		As an explicit example of this, we can consider $L$ to be the $2$-dimensional vector space with basis $\{ e_1,e_2 \}$, together with the product $[e_1,e_2]=-[e_2,e_1]=e_1$ and zero elsewhere, $\alpha$~to be represented by the matrix $\left( \begin{array}{cc} 1 & 1 \\ 0 & 1 \end{array} \right)$, and $M$ to be the ideal of $L$ generated by $\{e_1\}$.
		
		\item [d)] Any homomorphism of Hom-Lie algebras $f \colon (L,\alpha_L)\to (M,\alpha_M)$ induces a Hom-action of $(L,\alpha_L)$ on $(M,\alpha_M)$ by $^xm = [f(x), m]$, for $x \in L$ and $m \in M$.
		
		\item [e)] Let
		$\xymatrix{ 0 \ar[r] & (M,\alpha_M) \ar[r]^i & (K,\alpha_K) \ar@<-1ex>[r]_{\pi} & (L,\alpha_L) \ar[r] \ar[r]\ar@<-1ex>[l]_{\sigma}& 0}$ be a split extension of Hom-Lie algebras. Then there is a Hom-action of $(L,\alpha_L)$ on $(M,\alpha_M)$ defined in the following way: ${}^{x}m=i^{-1}[\sigma(x),i(m)]$, for all $x\in L$, $m\in M$. In fact, there is an equivalence between split extensions and actions~\cite{CaGM}.
	\end{enumerate}
\end{example}

\begin{definition}[\cite{CaKhPa}]
	Let $(M,\alpha_M)$ and $(N,\alpha_N)$ be Hom-Lie algebras with Hom-actions on each other. The Hom-actions are said to be \emph{compatible} if
	\[
	^{(^mn)} m'=[m',^nm] \quad \text{and} \quad ^{(^nm)}n'=[n',^mn]
	\]
	for all $m,m'\in M$ and $n,n'\in N$.
\end{definition}

\begin{example}
	If $(H,\alpha_H)$ and $(H',\alpha_{H'})$ both are ideals of a Hom-Lie algebra $(L,\alpha_L)$, then the Hom-actions of $(H,\alpha_H)$ and $(H',\alpha_{H'})$ on each other, considered in Example~\ref{ejemplo 2} {a)}, are compatible.
\end{example}

Crossed modules of Hom-Lie algebras were introduced in~\cite{ShCh} in order to prove the existence of a
one-to-one correspondence between strict Hom-Lie 2-algebras and crossed modules of Hom-Lie algebras.

\begin{definition} \label{def cm}
	A \emph{precrossed module} of Hom-Lie algebras is a triple of the form $\big((M, \alpha_M),(L, \alpha_L),\mu\big)$,
	where $(M, \alpha_M)$ and $(L, \alpha_L)$ are Hom-Lie algebras together with a Hom-action from $(L, \alpha_L)$ over $(M, \alpha_M)$ and a Hom-Lie algebra homomorphism $\mu \colon (M, \alpha_M) \to (L, \alpha_L)$ such that the following identity hold:
	\begin{enumerate}
		\item[a)] $\mu(^xm) = [x, \mu(m)]$,
	\end{enumerate}
	for all $m \in M, x \in L$.
	
	A precrossed module $\big((M, \alpha_M),(L, \alpha_L),\mu\big)$ is said to be a \emph{crossed module} when the following identity is satisfied:
	\begin{enumerate}
		\item[b)] $^{\mu(m)} m' = [m,m']$,
	\end{enumerate}
	for $m, m' \in M$.
\end{definition}

\begin{remark} \label{cm properties}
	For a crossed module $\big((M, \alpha_M),(L, \alpha_L),\mu\big)$, the subalgebra $\Ima (\mu)$ is an ideal of $(L,\alpha_L)$ and $\Ker (\mu)$ is contained in the centre of $(M,\alpha_M)$. Moreover $(\Ker (\mu),\alpha_{M \mid})$ is a Hom-$\Coker (\mu)$-module.
\end{remark}

Since $\HomLie$ is a strongly protomodular category (by being a variety of distributive $\Omega_2$-groups~\cite{MaMe}) it satisfies the ``Smith is Huq'' condition~\cite{MaVa}, and therefore this way of introducing crossed modules corresponds to internal crossed modules in the sense of Janelidze~\cite{Jan}.

\begin{example}\label{Ex CM} \hfill
	\begin{enumerate}
		\item[a)] Let $(H,\alpha_H)$ be a Hom-ideal of a Hom-Lie algebra $(L,\alpha_L)$. Then the triple $\big((H,\alpha_H),(L,\alpha_L), inc\big)$ is a crossed module, where the action of $(L,\alpha_L)$ on $(H,\alpha_H)$ is given in Example~\ref{ejemplo 2}~a). There are two particular cases which allow us to think a Hom-Lie algebra as a crossed module, namely $(H,\alpha_H)=(L,\alpha_L)$ and $(H,\alpha_H)=(0,0)$. So $\big((L,\alpha_L),(L,\alpha_L),{\id}\big)$ and $\big((0,0),(L,\alpha_L),0\big)$ are crossed modules.
		\item[b)] Let $(L,\alpha_L)$ be a Hom-Lie algebra and $(M,\alpha_M)$ be a Hom-L-module. Then $\big((M,\alpha_M),(L,\alpha_L),0\big)$ is a crossed module.
	\end{enumerate}
\end{example}

\section{The non-abelian tensor and exterior products of Hom Lie algebras}\label{S:tensor}
\subsection{Non-abelian tensor product of Hom-Lie algebras}

Let us recall the non-abelian tensor product of Hom-Lie algebras introduced in~\cite{CaKhPa} as a generalisation of the non-abelian tensor product of Lie algebras~\cite{Ell2}.

Let $(M,\alpha_M)$ and $(N,\alpha_N)$ be Hom-Lie algebras acting on each other compatibly.
Consider the Hom-vector space $(M\otimes N,\alpha_{M\otimes N})$ given by the tensor product $M\otimes N$ of the underlying vector spaces and the linear map $\alpha_{M \otimes N} \colon M \otimes N \to M \otimes N$, $\alpha_{M \otimes N}(m \otimes n) = \alpha_M(m) \otimes \alpha_N(n)$. Denote by $D(M,N)$ the subspace of $M \otimes N$ generated by all elements of the form
\begin{enumerate}
	\item[a)] $[m,m']\otimes \alpha_N(n) -\alpha_M(m)\otimes {}^{m'}n +\alpha_M(m') \otimes {}^mn$,
	\item[b)] $\alpha_M(m)\otimes [n,n'] - {}^{n'}m\otimes\alpha_N(n)+{}^{n}m\otimes\alpha_N(n')$,
	\item[c)] ${}^{n}m\otimes {}^{m}n$,
	\item[d)]${}^{n}m\otimes {}^{m'}n' +{}^{n'}m'\otimes {}^{m}n$,
	\item[e)]$[{}^{n}m,{}^{n'}m']\otimes \alpha_N({}^{m''}n'')+[{}^{n'}m',{}^{n''}m'']\otimes \alpha_N({}^{m}n)
	+[{}^{n''}m'',{}^{n}m]\otimes \alpha_N({}^{m'}n')$,
\end{enumerate}
for $m,m',m''\in M$ and $n,n',n''\in N$.

The quotient vector space $(M\otimes N)/D(M,N)$ with the product
\begin{equation}\label{pr_in_tensor}
	[m\otimes n, m'\otimes n']=-{}^{n}m\otimes {}^{m'}n'
\end{equation}
and together with the endomorphism $(M\otimes N)/D(M,N)\to (M\otimes N)/D(M,N)$ induced by $\alpha_{M\otimes N}$, is a Hom-Lie algebra, which is called the \emph{non-abelian tensor product of Hom-Lie algebras} $(M,\alpha_M)$ and $(N,\alpha_N)$ (or \emph{Hom-Lie tensor product} for short). It will be denoted by $(M\star N, \alpha_{M\star N})$ and the equivalence class of $m\otimes n$ will be denoted by $m\star n$.

\begin{lemma}[\cite{CaKhPa}]\label{action-on-tensor} 
	Let $(M,\alpha_M)$ and $(N,\alpha_N)$ be Hom-Lie algebras with compatible actions on each other. Then the following statements hold:
	\begin{enumerate}
		\item[a)] There are homomorphisms of Hom-Lie algebras
		\begin{align*}
			\psi_M&\colon (M\star N, \alpha_{M \star N}) \to (M, \alpha_{M }), & \psi_M(m\star n)&= -{}^nm,\\
			\psi_N&\colon (M\star N, \alpha_{M \star N}) \to (N, \alpha_N), & \psi_N(m\star n)&= {}^mn.
		\end{align*}
		\item[b)]
		There are Hom-actions of $(M, \alpha_{M })$ and $(N, \alpha_{N })$ on the Hom-Lie tensor product ($M\star N, \alpha_{M \star N}$) given, for all $m,m'\in M$, $n,n'\in N$, by
		\begin{align*}
			{}^{m'}(m\star n)&=[m',m]\star \alpha_N(n)+\alpha_M(m)\star {}^{m'}n \\
			{}^{n'}(m\star n)&={}^{n'}m\star \alpha_N(n)+\alpha_M(m)\star [n',n]
		\end{align*}
		\item[c)] $\Ker(\psi_M)$ and $\Ker(\psi_N)$ are contained in the centre of $(M\star N, \alpha_{M \star N})$.
		\item[d)] The induced Hom-action of $\Ima(\psi_1)$ on $\Ker(\psi_1)$ and the induced Hom-action action of $\Ima(\psi_2)$ on $\Ker(\psi_2)$ are trivial.
		\item[e)] The homomorphisms $\psi_M$ and $\psi_N$ satisfy the following properties for all $m, m' \in M$, $n, n' \in N$:
		\begin{align*}
			\psi_M(^{m'}(m \star n)) &= [\alpha_M(m'), \psi_M(m \star n)], \\
			\psi_N(^{n'}(m \star n)) &= [\alpha_N(n'), \psi_N(m \star n)], \\
			{^{\psi_M(m \star n)}}(m' \star n') &= [\alpha_{M \star N}(m \star n), m' \star n'] = {^{\psi_N(m \star n)}} (m' \star n').
		\end{align*}
	\end{enumerate}
\end{lemma}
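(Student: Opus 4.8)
The plan is to treat parts a) and b) as the computational core and then deduce c), d) and e) from them. For part a), I would first check that $\psi_M$ and $\psi_N$ are well defined, i.e.\ that the linear maps $m\otimes n\mapsto -{}^nm$ and $m\otimes n\mapsto {}^mn$ annihilate $D(M,N)$. This amounts to evaluating each map on the five families of generators a)--e) and showing the outcome vanishes, using the action axioms of Definition~\ref{action} together with the compatibility conditions. For instance, on a generator of type a) the map $\psi_M$ produces $-{}^{\alpha_N(n)}[m,m']+{}^{({}^{m'}n)}\alpha_M(m)-{}^{({}^mn)}\alpha_M(m')$, which collapses to zero after applying axiom b) of the $N$-action together with the compatibility identity and skew-symmetry; the argument for $\psi_N$ and for the remaining generators is analogous. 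Once well-definedness is secured, that $\psi_M$ and $\psi_N$ preserve the bracket is immediate from the product formula~\eqref{pr_in_tensor}, and commutation with the twists follows from $\alpha_{M\star N}(m\star n)=\alpha_M(m)\star\alpha_N(n)$.

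For part b), the same strategy applies: I would verify that the two displayed formulas descend to $M\star N$ by checking that, for fixed $m'\in M$ (resp.\ fixed $n'\in N$), the image in $M\star N$ of each generator of $D(M,N)$ vanishes. Granting well-definedness, the three Hom-action axioms of Definition~\ref{action} become bilinear identities in $M\star N$ that reduce, after expanding the defining formulas and using~\eqref{pr_in_tensor}, to relations already encoded among the generators of $D(M,N)$; this is where most of the bookkeeping lives.

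The remaining parts follow formally. The key observation is that, by~\eqref{pr_in_tensor} and part a), the bracket factors as $[m\star n,m'\star n']=\psi_M(m\star n)\star\psi_N(m'\star n')$, and this extends bilinearly to arbitrary elements of $M\star N$. Hence if $k\in\Ker(\psi_M)$ then $[\alpha_{M\star N}^{j}(k),z]=\psi_M(\alpha_{M\star N}^{j}(k))\star\psi_N(z)=0$ for every $z\in M\star N$ and every $j$, since $\psi_M$ commutes with the twist; by skew-symmetry the symmetric computation on the right slot handles $\Ker(\psi_N)$, giving part c). Part e) is a direct computation combining the action formulas of part b) with the identities $\psi_M(m\star n)=-{}^nm$, $\psi_N(m\star n)={}^mn$ and the bracket~\eqref{pr_in_tensor}. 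Finally, part d) follows from c) and e): for $t\in M\star N$ and $k\in\Ker(\psi_M)$ one has ${}^{\psi_M(t)}k=[\alpha_{M\star N}(t),k]$ by e), and this bracket vanishes by the factoring above because $k\in\Ker(\psi_M)$ lies in the centre; the statement for $\Ima(\psi_N)$ on $\Ker(\psi_N)$ is symmetric.

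The main obstacle is the verification of well-definedness in parts a) and b): each of the five generating families of $D(M,N)$ must be tested, and the cancellations rely on invoking the correct action axiom together with the compatibility conditions in the right order. These computations are routine but delicate, and organising them so that the mutual-action and compatibility hypotheses are used symmetrically in $M$ and $N$ is the part that requires care.
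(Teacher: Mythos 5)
The paper offers no proof of this lemma to compare against: as the bracketed citation indicates, it is recalled from~\cite{CaKhPa} and used as a black box, so your proposal can only be judged on its own terms. Judged so, the outline is sound. Your sample cancellation for $\psi_M$ on a type-a) generator is correct: applying the map gives $-{}^{\alpha_N(n)}[m,m']+{}^{({}^{m'}n)}\alpha_M(m)-{}^{({}^mn)}\alpha_M(m')$, and axiom~b) of Definition~\ref{action} together with the compatibility identity ${}^{({}^mn)}m'=[m',{}^nm]$ and skew-symmetry does collapse this to zero. The deductions of c), d), e) from a) and b) are also valid, and the factoring $[t,z]=\psi_M(t)\star\psi_N(z)$ --- which holds on generators by~\eqref{pr_in_tensor} and extends by bilinearity of the bracket, of $\psi_M$, $\psi_N$ and of $\star$ --- is a genuinely efficient device: it yields c) at once (for $\Ker(\psi_N)$ the skew-symmetry step you mention is indeed needed, since the centre is defined by brackets $[\alpha^k(x),y]$ with $x$ in the \emph{left} slot), and combined with the last identity of e) it gives d) without further computation. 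This is a leaner organisation than verifying c) and d) element-wise, which is how such statements are usually established in the literature on non-abelian tensor products. The only caveat is that the real weight of the lemma --- checking that $\psi_M$, $\psi_N$ and the two action formulas of b) annihilate all five families of generators of $D(M,N)$, and that the formulas of b) satisfy the three axioms of Definition~\ref{action} --- is deferred as ``routine but delicate''; you identify correctly what must be checked and with which identities, but a complete write-up would have to carry out those verifications, as they constitute essentially all of the work.
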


\subsection{Non-abelian exterior product of Hom-Lie algebras}

We introduce now the non-abelian exterior product, following~\cite{DoGaKh, Ell1}.
Let us consider two crossed modules of Hom-Lie algebras $\eta \colon ( M,\alpha_M) \to (L, \alpha_L)$ and $\mu \colon ( N,\alpha_N) \to (L, \alpha_L)$. Then there are induced compatible Hom-actions of $( M,\alpha_M )$ and $(N,\alpha_N )$ on each other via the Hom-action of $(L, \alpha_L)$ (in fact there is an equivalence between pairs of crossed modules with the same domain and compatible actions~\cite{CiMaMe}). Therefore, we can construct the non-abelian tensor product $(M \star N, \alpha_{M \star N})$. We define $(M \Box N, \alpha_{M \Box N})$ as the Hom-vector subspace of $(M \star N, \alpha_{M \star N})$, where $M \Box N$ is the vector subspace spanned by the elements of the form $m \star n, m \in M, n \in N$, such that $\eta(m) = \mu(n)$, and $ \alpha_{M \Box N}$ is the restriction of $\alpha_{M \star N}$ to $M \Box N$.

\begin{proposition}
	The Hom-vector subspace $(M \Box N, \alpha_{M \Box N})$ is contained in the centre of $(M \star N, \alpha_{M \star N})$, so it is an ideal of $(M \star N, \alpha_{M \star N})$.
\end{proposition}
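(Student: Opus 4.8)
The plan is to reduce everything to Lemma~\ref{action-on-tensor}~c), which already tells us that $\Ker(\psi_M)$ sits inside the centre of $(M\star N,\alpha_{M\star N})$. Concretely, I would prove the stronger statement that $M\Box N\subseteq\Ker(\psi_M)$ (equivalently $M\Box N\subseteq\Ker(\psi_N)$), from which both assertions of the proposition follow at once.

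First I would make the induced actions explicit. Since the compatible actions of $(M,\alpha_M)$ and $(N,\alpha_N)$ on each other are obtained from the $(L,\alpha_L)$-actions through the crossed modules, one has ${}^n m={}^{\mu(n)}m$ and ${}^m n={}^{\eta(m)}n$ for all $m\in M$, $n\in N$. Because $\psi_M$ is linear, it suffices to test membership in $\Ker(\psi_M)$ on the spanning elements $m\star n$ with $\eta(m)=\mu(n)$. For such an element I would compute, using $\psi_M(m\star n)=-{}^n m$ from Lemma~\ref{action-on-tensor}~a),
$$
\psi_M(m\star n)=-{}^n m=-{}^{\mu(n)}m=-{}^{\eta(m)}m=-[m,m]=0,
$$
where the third equality uses the hypothesis $\eta(m)=\mu(n)$ and the fourth uses the crossed-module axiom~(b) for $\eta\colon(M,\alpha_M)\to(L,\alpha_L)$, namely ${}^{\eta(m)}m'=[m,m']$, together with skew-symmetry. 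Symmetrically, $\psi_N(m\star n)={}^m n={}^{\eta(m)}n={}^{\mu(n)}n=[n,n]=0$ by axiom~(b) for $\mu$. Hence every generator of $M\Box N$, and therefore all of $M\Box N$, lies in $\Ker(\psi_M)$.

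Combining this with Lemma~\ref{action-on-tensor}~c) gives $M\Box N\subseteq\Ker(\psi_M)\subseteq Z(M\star N)$, which is the first assertion. For the second assertion I would note that $M\Box N$ is $\alpha_{M\star N}$-invariant: if $\eta(m)=\mu(n)$ then, since $\eta$ and $\mu$ commute with the twists, $\eta(\alpha_M(m))=\alpha_L(\eta(m))=\alpha_L(\mu(n))=\mu(\alpha_N(n))$, so $\alpha_{M\star N}(m\star n)=\alpha_M(m)\star\alpha_N(n)\in M\Box N$; this is exactly what makes the restriction $\alpha_{M\Box N}$ well defined. Being a central $\alpha$-invariant subspace, $M\Box N$ is closed under the bracket (all such brackets vanish) and satisfies $[M\Box N,M\star N]=0\subseteq M\Box N$, so it is an ideal.

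I do not expect a genuine obstacle here, since the whole argument is a short unwinding of the induced actions. The one point that requires care is the bookkeeping of which crossed module induces which action, so that the hypothesis $\eta(m)=\mu(n)$ is fed into the correct crossed-module identity ${}^{\eta(m)}m=[m,m]=0$; once that is set up, the vanishing is forced by skew-symmetry.
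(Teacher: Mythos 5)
Your proof is correct, but it reaches centrality by a different route than the paper. The paper verifies the definition of the centre directly: for a generator $m\star n$ with $\eta(m)=\mu(n)$ and an arbitrary $m'\star n'$, it computes $[\alpha_{M\star N}^k(m\star n),\,m'\star n'] = -\,{}^{\alpha_N^k(n)}\alpha_M^k(m)\star {}^{m'}n'$ via the bracket formula~\eqref{pr_in_tensor}, and then runs exactly the identity chain you use --- the action induced through $\mu$, the fact that $\mu$ and $\eta$ commute with the twists, the hypothesis $\eta(m)=\mu(n)$, and the crossed-module axiom ${}^{\eta(m)}m'=[m,m']$ plus skew-symmetry --- but applied to the $\alpha^k$-twisted elements, so that the vanishing holds for every $k\in\mathbb{N}$ as the definition of $Z(M\star N)$ requires. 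You instead prove the intermediate statement $M\Box N\subseteq\Ker(\psi_M)$ (your chain $-{}^{n}m=-{}^{\mu(n)}m=-{}^{\eta(m)}m=-[m,m]=0$ is precisely the $k=0$ instance of the paper's computation) and then invoke Lemma~\ref{action-on-tensor}~c) to conclude centrality. Both arguments are valid: your reduction is shorter, delegates all the $\alpha^k$-bookkeeping to the lemma, and yields the slightly sharper fact $M\Box N\subseteq\Ker(\psi_M)\cap\Ker(\psi_N)$; the paper's computation is self-contained in that it uses only the bracket formula and never needs part~c) of the lemma. Your explicit check that $M\Box N$ is $\alpha_{M\star N}$-invariant (needed both for $\alpha_{M\Box N}$ to be well defined and for the ideal claim) is a point the paper leaves implicit, so including it is a small improvement rather than a defect.
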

\begin{proof}
	For any $m \star n \in M \Box N, m' \star n' \in M \star N$ we have:
	\begin{align*}
		[\alpha_{M \star N}^k(m \star n), m' \star n'] & = - {^{\alpha_N^k(n)}} \alpha_M^k(m) \star {^{m'}n'}\\
		{}& = - {^{\mu(\alpha_N^k(n))}} \alpha_M^k(m) \star {^{m'}n'}\\
		{}& = - {^{\alpha_L^k( \mu (n) )}} \alpha_M^k(m) \star {^{m'}n'}\\
		{}& = - {^{\alpha_L^k( \eta(m))}} \alpha_M^k(m) \star {^{m'}n'}\\
		{}& = - {^{ \eta (\alpha_M^k(m))}} \alpha_M^k(m) \star {^{m'}n'}\\
		{}& = -[{\alpha_M^k(m)}, \alpha_M^k(m)] \star {^{m'}n'} = 0
	\end{align*}
\end{proof}

\begin{definition}
	Let $\eta \colon ( M,\alpha_M) \to (L, \alpha_L)$ and $\mu \colon ( N,\alpha_N) \to (L, \alpha_L)$ be crossed modules of Hom-Lie algebras. The \emph{non-abelian exterior product} of the Hom-Lie algebras $( M,\alpha_M)$ and $( N,\alpha_N)$ is the quotient 
	\[
	(M,\alpha_M) \curlywedge ( N,\alpha_N) = \left( \frac{M \star N}{M \Box N}, \overline{\alpha}_{M \star N} \right)
	\]
	where $\overline{\alpha}_{M \star N}$ is the induced homomorphism by $\alpha_{M \star N}$ on the quotient.
	
	The coset corresponding to $m \star n$ is denoted by $m \curlywedge n, m \in M, n \in n$.
\end{definition}

\begin{definition}
	Let $\eta \colon ( M,\alpha_M) \to (L, \alpha_L)$ and $\mu \colon ( N,\alpha_N) \to (L, \alpha_L)$ be crossed modules of Hom-Lie algebras. For any Hom-Lie algebra $(P, \alpha_P)$, the bilinear map $h \colon M\times N\to P$ is said to be an \emph{exterior Hom-Lie pairing} if the following properties are satisfied:
	\begin{enumerate}
		\item [a)] $h([m,m'],\alpha_N(n)) = h(\alpha_M(m), {}^{m'} n) - h(\alpha_M(m'), {}^m n)$,
		\item [b)] $h(\alpha_M(m),[n,n']) = h({}^{n'} m, \alpha_N(n)) - h({}^n m, \alpha_N(n'))$,
		\item [c)] $h({}^n m, {}^{m'} n') = - [h(m,n), h(m',n')]$,
		\item [d)] $h(m,n)=0$ whenever $\eta(m) = \mu(n)$,
		\item [e)] $h \circ (\alpha_M \times \alpha_N) = \alpha_P \circ h$,
	\end{enumerate}
	for all $m, m' \in M$, $n, n' \in N$.
	
	An exterior Hom-Lie pairing $h\colon (M \times N, \alpha_{M \times N}) \to (P, \alpha_P)$ is said to be \emph{universal} if for any other exterior Hom-Lie pairing $h'\colon (M \times N, \alpha_{M \times N}) \to (Q, \alpha_Q)$, there is a unique homomorphism of Hom-Lie algebras $\theta \colon (P, \alpha_P) \to (Q, \alpha_Q)$ such that~${\theta \circ h = h'}$.
\end{definition}

\begin{example}\
	\begin{enumerate}
		\item[a)] If $\alpha_M = {\id}_M, \alpha_N = {\id}_N, \alpha_P = {\id}_P,$ and $\alpha_L = {\id}_L$, then the definition of exterior Lie pairing in~\cite{Ell1} is recovered.
		
		\item[b)] Let $( M,\alpha_M), ( N,\alpha_N)$ be Hom-ideals of $(L, \alpha_L)$ and let $\eta, \mu$ be the inclusion maps. Then the bilinear map $h \colon M \times N \to M \cap N, h(m,n) =[m,n],$ is an exterior Hom-Lie pairing.
	\end{enumerate}
\end{example}

\begin{proposition}
	Let $\eta \colon ( M,\alpha_M) \to (L, \alpha_L)$ and $\mu \colon ( N,\alpha_N) \to (L, \alpha_L)$ be crossed modules of Hom-Lie algebras. The map 
	\[
	h\colon (M \times N, \alpha_{M \times N}) \to (M \curlywedge N, \alpha_{M \curlywedge N}), \qquad h(m,n) = m \curlywedge n,
	\] is a universal exterior Hom-Lie pairing.
\end{proposition}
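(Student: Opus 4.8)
The plan is to verify the five pairing axioms for $h$ and then to construct $\theta$ by descent along the two quotient maps $M \otimes N \twoheadrightarrow M \star N \twoheadrightarrow M \curlywedge N$. That $h(m,n) = m \curlywedge n$ is an exterior Hom-Lie pairing is essentially a translation of the defining relations: axioms a) and b) are the images in $M \curlywedge N$ of the generators a) and b) of $D(M,N)$, which vanish already in $M \star N$; axiom c) is the product formula \eqref{pr_in_tensor} read in the quotient, namely $[m \curlywedge n, m' \curlywedge n'] = -\,{}^n m \curlywedge {}^{m'} n'$; axiom d) holds because $m \curlywedge n = 0$ precisely when $\eta(m) = \mu(n)$, these being the generators of $M \Box N$; and axiom e) is just the definition of $\alpha_{M \curlywedge N} = \overline{\alpha}_{M \star N}$.

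For universality, given an exterior Hom-Lie pairing $h' \colon (M \times N, \alpha_{M \times N}) \to (Q, \alpha_Q)$, I would first use bilinearity to produce the linear map $\tilde h' \colon M \otimes N \to Q$, $\tilde h'(m \otimes n) = h'(m,n)$, and then show that it annihilates both $D(M,N)$ and $M \Box N$, so that it factors through a linear map $\theta \colon M \curlywedge N \to Q$ satisfying $\theta \circ h = h'$. On the generators a) and b) of $D(M,N)$ this is axioms a) and b); on generator c) one gets $h'({}^n m, {}^m n) = -[h'(m,n), h'(m,n)] = 0$ by axiom c) and skew-symmetry, and generator d) vanishes in the same way; finally $M \Box N$ is killed by axiom d).

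The main obstacle is the ``Hom-Jacobi'' generator e) of $D(M,N)$, and here I would bypass axioms a) and b) entirely, reducing instead to the twisted Jacobi identity of the target. Writing $P = h'(m,n)$, $P' = h'(m',n')$ and $P'' = h'(m'',n'')$, skew-symmetry together with the compatibility of the two actions gives $[{}^n m, {}^{n'} m'] = -\,{}^{({}^m n)}({}^{n'} m')$, while compatibility with the twists gives $\alpha_N({}^{m''} n'') = {}^{\alpha_M(m'')} \alpha_N(n'')$. Two applications of axiom c) followed by axiom e) then collapse the first summand to $-[\alpha_Q(P''), [P, P']]$, and the two cyclic summands to $-[\alpha_Q(P), [P', P'']]$ and $-[\alpha_Q(P'), [P'', P]]$. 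Their sum is exactly minus the Hom-Jacobi sum for $P, P', P''$ in $Q$, hence $0$; this is the precise point at which the twisted Jacobi identity of the target is what makes the argument work.

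It then remains to confirm that the induced $\theta$ is a morphism of Hom-Lie algebras and that it is unique. Compatibility with the brackets follows from \eqref{pr_in_tensor} and axiom c), since $\theta([m \curlywedge n, m' \curlywedge n']) = -h'({}^n m, {}^{m'} n') = [h'(m,n), h'(m',n')] = [\theta(m \curlywedge n), \theta(m' \curlywedge n')]$, and compatibility with the twists is axiom e). Uniqueness is immediate, because the elements $m \curlywedge n$ generate $(M \curlywedge N, \alpha_{M \curlywedge N})$ and the requirement $\theta \circ h = h'$ forces $\theta(m \curlywedge n) = h'(m,n)$.
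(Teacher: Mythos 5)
Your proof is correct and complete. Note that the paper itself offers no proof of this proposition at all --- it is stated bare, as a routine analogue of Ellis's Lie-algebra argument --- so your write-up supplies exactly what is missing rather than diverging from an existing argument. The verification that $h$ is a pairing is, as you say, a direct reading of the defining relations of $M \star N$, of $M \Box N$, and of the induced twist; and descending $\tilde h'$ through $M \otimes N \twoheadrightarrow M \star N \twoheadrightarrow M \curlywedge N$ is the right mechanism for universality. The only step with genuine content is generator e), and your reduction is sound: compatibility gives $[{}^{n}m,{}^{n'}m'] = -\,{}^{({}^{m}n)}({}^{n'}m')$, the Hom-action axioms give $\alpha_N({}^{m''}n'') = {}^{\alpha_M(m'')}\alpha_N(n'')$, and then two uses of axiom c) plus axiom e) convert the three summands into $-[\alpha_Q(P''),[P,P']]$, $-[\alpha_Q(P),[P',P'']]$ and $-[\alpha_Q(P'),[P'',P]]$, whose sum vanishes precisely by the Hom-Jacobi identity in $Q$. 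Two small wording points, neither a gap: in axiom d) you only need (and only have) the implication that $\eta(m)=\mu(n)$ forces $m \curlywedge n = 0$; the converse suggested by your ``precisely when'' is neither justified (an element $m \curlywedge n$ may vanish for other reasons) nor used. And when checking that $\theta$ preserves brackets and twists, it is worth saying explicitly that a check on the spanning elements $m \curlywedge n$ suffices, by bilinearity of the bracket and linearity of $\theta$ --- the same spanning fact your uniqueness argument relies on.
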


\begin{definition}[\cite{CaKhPa}] \label{alpha identity condition}
	It is said that a Hom-Lie algebra $(L, \alpha_L)$ satisfies the \emph{$\alpha$-identity condition} if 
	\[
	[L, \Ima(\alpha_L- \id_L)]=0
	\]
	which is equivalent to the condition $[x,y]=[\alpha_L(x),y]$ for all $x, y \in L$.
\end{definition}

Any Lie algebra included into $\HomLie$ satisfies the $\alpha$-identity condition and more examples can be found in~\cite{CaKhPa, CaKhPa1}.

\begin{proposition}
	Let $\eta \colon ( M,\alpha_M) \to (L, \alpha_L)$ and $\mu \colon ( N,\alpha_N) \to (L, \alpha_L)$ be crossed modules of Hom-Lie algebras such that $(L, \alpha_L)$ satisfies the $\alpha$-identity condition. Then $\phi \colon (M \curlywedge N, \alpha_{M \curlywedge N}) \to (L, \alpha_L)$, $\phi(m \curlywedge n) = - \eta ({^nm} ) = \mu( {^mn}),$ is a precrossed module.
\end{proposition}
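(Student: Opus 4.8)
The plan is to exhibit a Hom-action of $(L,\alpha_L)$ on $(M \curlywedge N, \alpha_{M \curlywedge N})$, to check that $\phi$ is a well-defined homomorphism, and then to verify the precrossed module identity, where the $\alpha$-identity condition will enter decisively. First I would settle that $\phi$ is well defined. By Lemma~\ref{action-on-tensor} we have $\psi_M(m \star n) = -{}^n m$ and $\psi_N(m \star n) = {}^m n$, so the two prescriptions for $\phi$ are $\eta \circ \psi_M$ and $\mu \circ \psi_N$. Since the induced actions of $M$ on $N$ and of $N$ on $M$ are those of $L$ through $\eta$ and $\mu$, the precrossed module axiom gives $\eta({}^n m) = [\mu(n), \eta(m)]$ and $\mu({}^m n) = [\eta(m), \mu(n)]$, so by skew-symmetry $-\eta({}^n m) = \mu({}^m n)$ and the two descriptions agree as maps $M \star N \to L$. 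On a generator $m \star n$ of $M \Box N$ one has $\eta(m) = \mu(n) =: \ell_0$, whence $\phi$ sends it to $-[\ell_0, \ell_0] = 0$; thus $\eta \circ \psi_M$ annihilates $M \Box N$ and factors through a map $\phi$ on $M \curlywedge N$. As a composite of Hom-Lie homomorphisms commuting with the twists, $\phi$ is itself a homomorphism satisfying $\phi \circ \alpha_{M \curlywedge N} = \alpha_L \circ \phi$.

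Second I would produce the $L$-action. Using the $L$-actions on $M$ and on $N$ coming from the two crossed modules, I set
\[ {}^\ell(m \curlywedge n) = {}^\ell m \curlywedge \alpha_N(n) + \alpha_M(m) \curlywedge {}^\ell n. \]
Specialising $\ell = \eta(m')$ (resp.\ $\ell = \mu(n')$) recovers the $M$-action (resp.\ $N$-action) of Lemma~\ref{action-on-tensor}~b), so this is the natural common extension. The technical heart here is to check that this formula respects the defining relations a)--e) of $D(M,N)$ and descends past $M \Box N$, and then that the three axioms of Definition~\ref{action} hold; these are the long but routine verifications, and I expect the $\alpha$-identity condition to be what guarantees compatibility with $M \Box N$.

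Third, and this is the crux, I would verify the precrossed module identity $\phi({}^\ell(m \curlywedge n)) = [\ell, \phi(m \curlywedge n)]$. Writing $\phi(a \curlywedge b) = [\eta(a), \mu(b)]$ and applying the precrossed module axiom for $\eta$ and $\mu$ together with $\eta \alpha_M = \alpha_L \eta$ and $\mu \alpha_N = \alpha_L \mu$, one computes, with $q = \eta(m)$ and $r = \mu(n)$,
\[ \phi({}^\ell(m \curlywedge n)) = [[\ell, q], \alpha_L(r)] + [\alpha_L(q), [\ell, r]]. \]
Feeding the Hom-Jacobi identity for $\ell, q, r$ into this expression collapses it to $[\alpha_L(\ell), [q, r]]$. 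Finally the $\alpha$-identity condition $[\alpha_L(\ell), [q,r]] = [\ell, [q,r]]$ turns this into $[\ell, [q, r]] = [\ell, \phi(m \curlywedge n)]$, as required. This last step is exactly where the hypothesis on $(L, \alpha_L)$ is unavoidable, and it is the reason the statement requires the $\alpha$-identity condition.

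I expect the genuine obstacle to be the well-definedness of the $L$-action in the second step, since the relations of the non-abelian tensor product are numerous and the twist clutters every term, whereas the conceptual content lives in the short Hom-Jacobi-plus-$\alpha$-identity computation of the third step.
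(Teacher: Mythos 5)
Your proposal is correct, and its mathematical core coincides with the paper's proof: you take the identical action formula ${}^{\ell}(m \curlywedge n) = {}^{\ell}m \curlywedge \alpha_N(n) + \alpha_M(m) \curlywedge {}^{\ell}n$, and your third step --- collapsing $[[\ell,q],\alpha_L(r)] + [\alpha_L(q),[\ell,r]]$ to $[\alpha_L(\ell),[q,r]]$ via the Hom-Jacobi identity and then invoking the $\alpha$-identity condition to replace $\alpha_L(\ell)$ by $\ell$ --- is precisely the paper's verification of the precrossed-module axiom. Where you genuinely diverge is in establishing that $\phi$ is a well-defined homomorphism of Hom-Lie algebras: the paper proves bracket-preservation by a direct calculation on generators and never discusses well-definedness at all, whereas you factor $\phi$ as $\eta \circ \psi_M$, use Lemma~\ref{action-on-tensor}~a) to get a Hom-Lie homomorphism on $(M \star N, \alpha_{M \star N})$ for free, check that it annihilates the generators $m \star n$ with $\eta(m)=\mu(n)$, and descend to the quotient; this is cleaner, and it additionally settles two points the paper leaves implicit, namely that the two prescriptions $-\eta({}^{n}m)$ and $\mu({}^{m}n)$ agree and that $\phi$ really lives on $M \curlywedge N$. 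Two caveats. First, like the paper, you never actually verify that the displayed formula defines a Hom-action of $(L,\alpha_L)$ on $(M \curlywedge N, \alpha_{M \curlywedge N})$ (compatibility with the relations of $D(M,N)$ and with $M \Box N$, plus the axioms of Definition~\ref{action}); you at least flag this deferral explicitly, while the paper silently asserts the action, so neither argument is complete on that point. Second, your guess that the $\alpha$-identity condition is what makes the action descend past $M \Box N$ is off target: its one essential role, in the paper exactly as in your own step three, is the final replacement of $[\alpha_L(\ell),[q,r]]$ by $[\ell,[q,r]]$ in the precrossed-module identity.
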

\begin{proof}
	The Hom-Lie action of $(L, \alpha_L)$ on $(M \curlywedge N, \alpha_{M \curlywedge N})$ is given by 
	\[
	{^l(m \curlywedge n)} = {^lm} \curlywedge \alpha_N(n) + \alpha_M(m) \curlywedge {^ln}
	\]
	
	First we need to check that $\phi$ is a homomorphism of Hom-Lie algebras:
	\begin{align*}
		\phi[m \curlywedge n, m' \curlywedge n'] &= \phi(-{^nm} \curlywedge {^{m'}n'}) = \eta \left( {^{(^{m'}n')} ({^nm})} \right) = \eta \left( {^{\mu(^{m'}n')} ({^nm})} \right)\\
		{}&= [\mu(^{m'}n') ,\eta({^nm})] = [\mu(^{\eta(m')}n') ,\eta({^{\mu(n)}m})] \\
		{}&= \big[[\eta(m'),\mu(n')], [\mu(m), \eta(n)]\big] = - [\eta(-{^nm}), \eta(- ^{n'}m')]\\
		{}&= [\phi(m \curlywedge n), \phi(m' \curlywedge n')]
	\end{align*}
	
	Obviously $\alpha_L \circ \phi = \phi \circ \alpha_{M \curlywedge N}$.
	
	Then, we check that it is satisfies the precrossed module condition:
	\begin{align*}
		\phi\big({^l(m \curlywedge n)}\big) &= \phi({^lm} \curlywedge \alpha_N(n) + \alpha_M(m) \curlywedge {^ln})\\
		{}&= - \eta \left( {^{\alpha_N(n)}{(^lm)}} \right) - \eta \left( {^{(^ln)}\alpha_M(m)} \right) \\
		{}&= - \eta \left( {^{\mu(\alpha_N(n))}{(^lm)}} \right) - \eta \left( {^{\mu(^ln)}\alpha_M(m)} \right)\\
		{}&= -[\alpha_L(\mu(n)), \eta{(^lm)}] - \eta \left({^{[l,\mu(n)]}}\alpha_M(m) \right) \\
		{}&= -\big[\alpha_L(\mu(n)), [l,\eta{(m)}]\big] -\big[[l,\mu(n)], \alpha_L(\eta(m))\big]\\
		{}&= - \big[\alpha_L(l),[\mu(n),\eta(m)]\big]\\
		{}&= - \big[l,[\mu(n),\eta(m)]\big] = [l, \eta(- {^nm})] = [l, \phi(m \curlywedge n)]
	\end{align*}
	
\end{proof}

There is a surjective homomorphism of Hom-Lie algebras $\pi \colon ( M,\alpha_M) \star ( N,\alpha_N) \to ( M,\alpha_M) \curlywedge( N,\alpha_N)$ given by $\pi(m \star n) = m \curlywedge n$.

Let $(M,\alpha_M), (N, \alpha_N)$ be ideals of a Hom-Lie algebra $(L, \alpha_L)$. According to Example~\ref{Ex CM}~a), they can be seen as crossed modules through the inclusion in~$(L, \alpha_L)$. Hence
\begin{equation} \label{exterior ideal}
	( M,\alpha_M) \curlywedge ( N,\alpha_N) = \left( \frac{M \star N}{\{m \star m \mid m \in M \cap N \}}, \overline{\alpha}_{M \star N} \right)
\end{equation}

\begin{proposition}
	Let $(M,\alpha_M)$ and $(N, \alpha_N)$ be ideals of a Hom-Lie algebra~$(L, \alpha_L)$. There is a homomorphism of Hom-Lie algebras 
	\[
	\theta_{M,N} \colon ( M,\alpha_M) \curlywedge ( N,\alpha_N) \to ( M,\alpha_M) \cap ( N,\alpha_N)
	\]
	given by $\theta_{M,N} (m \curlywedge n) = [m,n]$, for all $m \in M, n \in N$. Moreover $\theta_{M,N}$ is a crossed module of Hom-Lie algebras when that $(L, \alpha_L)$ satisfies the $\alpha$-identity condition.
\end{proposition}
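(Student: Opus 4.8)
The plan is to build $\theta_{M,N}$ out of the homomorphism $\psi_M$ (equivalently $\psi_N$) of Lemma~\ref{action-on-tensor} by corestriction and descent. Since $(M,\alpha_M)$ and $(N,\alpha_N)$ are ideals of $(L,\alpha_L)$, for $m\in M$, $n\in N$ the bracket $[m,n]$ lies in both $M$ and $N$, hence in the $\alpha$-invariant subalgebra $(M\cap N,\alpha_{L\mid})$; and $\psi_M(m\star n)=-{}^{n}m=[m,n]={}^{m}n=\psi_N(m\star n)$. Corestricting the Hom-Lie homomorphism $\psi_M$ to $M\cap N$ yields a homomorphism $M\star N\to M\cap N$, $m\star n\mapsto[m,n]$. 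By~\eqref{exterior ideal} the kernel of the projection $\pi\colon M\star N\to M\curlywedge N$ is spanned by the $m\star m$ with $m\in M\cap N$, each of which is sent to $[m,m]=0$; hence the map factors through $\pi$ and produces the asserted homomorphism $\theta_{M,N}\colon (M\curlywedge N,\alpha_{M\curlywedge N})\to (M\cap N,\alpha_{L\mid})$ with $\theta_{M,N}(m\curlywedge n)=[m,n]$. This first step needs neither the $\alpha$-identity condition nor anything beyond Lemma~\ref{action-on-tensor}.

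For the crossed module claim I first fix the action of $(M\cap N,\alpha_{L\mid})$ on $(M\curlywedge N,\alpha_{M\curlywedge N})$: as $M\cap N$ is a subalgebra of $L$, I restrict to it the Hom-action of $(L,\alpha_L)$ on $M\curlywedge N$ used in the preceding proposition, namely ${}^{l}(m\curlywedge n)={}^{l}m\curlywedge\alpha_N(n)+\alpha_M(m)\curlywedge{}^{l}n$; for $l\in M\cap N\subseteq M$ this coincides with the descent along $\pi$ of the $M$-action of Lemma~\ref{action-on-tensor}~b). With this action the precrossed module identity~a) of Definition~\ref{def cm} reads $\theta_{M,N}({}^{l}(m\curlywedge n))=[l,\theta_{M,N}(m\curlywedge n)]$. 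Applying $\psi_M$ to the first equation of Lemma~\ref{action-on-tensor}~e) and descending gives $\theta_{M,N}({}^{l}(m\curlywedge n))=[\alpha_L(l),[m,n]]$, and the $\alpha$-identity condition on $L$ rewrites $[\alpha_L(l),[m,n]]=[l,[m,n]]$, which is exactly what is required.

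The heart of the matter is condition~b) of Definition~\ref{def cm}, that is ${}^{\theta_{M,N}(m\curlywedge n)}(m'\curlywedge n')=[m\curlywedge n,m'\curlywedge n']$. Descending the third equation of Lemma~\ref{action-on-tensor}~e) along $\pi$ gives ${}^{\theta_{M,N}(m\curlywedge n)}(m'\curlywedge n')=[\alpha_{M\curlywedge N}(m\curlywedge n),m'\curlywedge n']$, so it suffices to prove that $(M\curlywedge N,\alpha_{M\curlywedge N})$ satisfies the $\alpha$-identity condition of Definition~\ref{alpha identity condition}. The key observation is that when $(L,\alpha_L)$ satisfies the $\alpha$-identity condition, $\alpha_L$ restricts to the identity on $[L,L]$: from $[x,y]=[\alpha_L(x),y]$ together with skew-symmetry one obtains $[x,y]=[x,\alpha_L(y)]=[\alpha_L(x),\alpha_L(y)]=\alpha_L([x,y])$ by multiplicativity. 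Consequently ${}^{\alpha_N(n)}\alpha_M(m)=[\alpha_L(n),\alpha_L(m)]=\alpha_L([n,m])=[n,m]={}^{n}m$, so the defining bracket~\eqref{pr_in_tensor} forces $[\alpha_{M\star N}(m\star n),m'\star n']=[m\star n,m'\star n']$ on generators, hence on all of $M\star N$ by bilinearity, and this passes to the quotient $M\curlywedge N$. Combining the two displays settles condition~b).

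I expect the only genuine subtlety to be the propagation of the $\alpha$-identity condition from $L$ to $M\star N$; once it is reformulated as ``$\alpha_L$ acts as the identity on $[L,L]$'' the twist disappears from every bracket that matters, and the remaining verifications are a matter of transporting the statements of Lemma~\ref{action-on-tensor}~e) across $\pi$ while keeping track of the identifications $\psi_M=\psi_N=\theta_{M,N}\circ\pi$ and of the agreement between the restricted $L$-action and the descended $M$-action on $M\curlywedge N$.
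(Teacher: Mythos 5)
Your proof is correct, but it is organised rather differently from the paper's, which is a bare-hands computation. The paper simply writes down the Hom-action of $(M\cap N,\alpha_{L\mid})$ on $(M\curlywedge N,\alpha_{M\curlywedge N})$ and verifies conditions a) and b) of Definition~\ref{def cm} directly on generators --- the Hom-Jacobi identity for a), the defining relations of the tensor product for b) --- invoking the $\alpha$-identity condition at the end of each computation; the existence of $\theta_{M,N}$ as a well-defined homomorphism is not argued at all (implicitly it comes from the universal property applied to the exterior pairing $h(m,n)=[m,n]$ of the preceding example). You instead construct $\theta_{M,N}$ by corestricting $\psi_M$ and descending along $\pi$ via~\eqref{exterior ideal}, which makes the well-definedness explicit --- a point the paper glosses over --- and you obtain both crossed-module identities by transporting Lemma~\ref{action-on-tensor}~e) through $\pi$, after isolating the observation that the $\alpha$-identity condition forces $\alpha_L$ to be the identity on $[L,L]$ and hence removes the twist from the bracket~\eqref{pr_in_tensor}. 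That observation is precisely what the paper uses silently in the step $\alpha_{L\mid}[m,n]\curlywedge\alpha_{L\mid}[m',n']=[m,n]\curlywedge[m',n']$, so your argument exposes the mechanism that the paper's computation hides; what it costs you is reliance on the cited Lemma~\ref{action-on-tensor}~e) together with one small assertion you should justify, namely that the $M$-action of Lemma~\ref{action-on-tensor}~b) preserves $M\Box N$ and therefore descends to $M\curlywedge N$. This is easy but deserves a line: for $l\in M$ and $a\in M\cap N$ one has ${}^{l}(a\star a)=[l,a]\star\alpha_N(a)+\alpha_M(a)\star[l,a]$ with $[l,a],\alpha_M(a)\in M\cap N$, and for $u,v\in M\cap N$ the polarisation identity $u\star v+v\star u=(u+v)\star(u+v)-u\star u-v\star v$ shows this lies in $M\Box N$.
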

\begin{proof}
	The Hom-action of $x \in ( M,\alpha_M) \cap ( N,\alpha_N)$ over $m \curlywedge n \in ( M,\alpha_M) \curlywedge ( N,\alpha_N)$ is given by
	\begin{align*}
		{}^x(m \curlywedge n) &= [x,m] \curlywedge \alpha_N(n) + \alpha_M(m) \curlywedge [x, n],\\
		\theta_{M,N}(^x(m \curlywedge n)) &= \big[[x,m],\alpha_N(n)\big] + \big[\alpha_M(m),[x,n]\big] = - \big[\alpha_{L \mid}(x),[n,m]\big]\\
		{}&= \big[x,[m,n]\big] = {^x\theta_{M,N}(m \curlywedge n)},\\
		{}^{\theta_{M,N}(m \curlywedge n)}(m' \curlywedge n') &= \big[[m,n],m'\big] \curlywedge \alpha_N(n')+\alpha_M(m') \curlywedge \big[[m,n],m'\big] \\
		{}&= \alpha_{L \mid} [m, n] \curlywedge \alpha_{L \mid} [m', n'] = [m,n] \curlywedge [m',n'] \\
		{}&= [m \curlywedge n, m' \curlywedge n'].
	\end{align*}
\end{proof}

\begin{proposition} \label{uce exterior}
	Let $(L, \alpha_L)$ be a perfect Hom-Lie algebra. Then,
	\[
	(L, \alpha_L) \star (L, \alpha_L) = (L, \alpha_L) \curlywedge (L, \alpha_L)
	\]
	and the homomorphism $\theta_{L, L} \colon (L, \alpha_L) \curlywedge (L, \alpha_L) \to (L, \alpha_L)$ is the universal central extension of $(L, \alpha_L)$. Moreover $\Ker(\theta_{L,L}) \cong \HH_2^{\alpha}(L, \alpha_L)$. 
\end{proposition}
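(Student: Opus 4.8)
The plan is to dispatch the three assertions in turn, with equation~\eqref{exterior ideal} and Lemma~\ref{action-on-tensor} as the main inputs.

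For the equality $(L,\alpha_L)\star(L,\alpha_L)=(L,\alpha_L)\curlywedge(L,\alpha_L)$, by~\eqref{exterior ideal} it is enough to show that the diagonal element $\ell\star\ell$ vanishes in $L\star L$ for every $\ell\in L$. First I would read off what the generating elements a)--e) of $D(L,L)$ say when $(L,\alpha_L)$ acts on itself by the bracket: relation c) becomes $[n,m]\star[m,n]=0$, that is, $c\star c=0$ for any single commutator $c$, while relation d) becomes $u\star v+v\star u=0$ for single commutators $u,v$. Since $(L,\alpha_L)$ is perfect, an arbitrary $\ell$ is a finite sum $\ell=\sum_i[a_i,b_i]$; expanding $\ell\star\ell$ bilinearly and pairing the diagonal terms (killed by c)) with the symmetric off-diagonal pairs (killed by d)) makes every summand cancel, so $\ell\star\ell=0$ and $L\Box L=0$ collapses the quotient.

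For the universal central extension statement I would first identify $\theta_{L,L}$ with the map $\psi_M=\psi_N$ of Lemma~\ref{action-on-tensor}~a): under the adjoint action $\psi_M(m\star n)=-{}^{n}m=[m,n]=\theta_{L,L}(m\curlywedge n)$, and since the diagonal has collapsed this descends to $\theta_{L,L}$ unambiguously. Surjectivity is immediate from perfectness ($\Ima\theta_{L,L}=[L,L]=L$), and Lemma~\ref{action-on-tensor}~c) yields $\Ker\theta_{L,L}\subseteq Z(L\curlywedge L)$, so the sequence is central in the sense of Definition~\ref{alfacentral}. I would also record that $(L\curlywedge L,\alpha_{L\curlywedge L})$ is itself perfect: writing $m=\sum_i[a_i,b_i]$ and $n=\sum_j[c_j,d_j]$ and using the bracket~\eqref{pr_in_tensor}, each generator satisfies $m\curlywedge n=\sum_{i,j}[a_i\curlywedge b_i,\,c_j\curlywedge d_j]$, hence lies in $[L\curlywedge L,L\curlywedge L]$.

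To establish universality I would take an arbitrary central extension $f\colon(K,\alpha_K)\to(L,\alpha_L)$ with central kernel $A$ and build a lift $g\colon(L\curlywedge L,\alpha_{L\curlywedge L})\to(K,\alpha_K)$ by choosing preimages, $g(m\curlywedge n)=[\tilde m,\tilde n]$ with $f(\tilde m)=m$ and $f(\tilde n)=n$. Independence of the chosen preimages is exactly the condition $[A,K]=0$; that $g$ is a well-defined Hom-Lie homomorphism, i.e.\ that it respects the relations a)--e) and commutes with the twists, is a routine check from the action identities, and $f\circ g=\theta_{L,L}$ holds by construction. Uniqueness follows from the standard argument: any two lifts differ by a linear map into the central ideal $A$, and expanding $[g p_1,g p_2]-[g' p_1,g' p_2]$ shows that all the correction terms vanish because $A$ is central; as $L\curlywedge L$ is perfect this forces $g=g'$.

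Finally, for $\Ker(\theta_{L,L})\cong\HH_2^{\alpha}(L,\alpha_L)$ I would compare directly with the homology complex. There is a surjection $\rho\colon C_2^{\alpha}(L)=\Lambda^2L\to L\curlywedge L$, $x\wedge y\mapsto x\curlywedge y$, well defined because the surjection $L\otimes L\to L\curlywedge L$ kills the diagonal $x\otimes x$ (as $x\curlywedge x=0$) and hence factors through $\Lambda^2L$; moreover $\theta_{L,L}\circ\rho=d_2$ since both send $x\wedge y$ to $[x,y]$. As $\rho$ is onto, $\Ker\theta_{L,L}=\rho(\Ker d_2)\cong\Ker d_2/(\Ker d_2\cap\Ker\rho)$, so everything reduces to the identity $\Ker\rho\cap\Ker d_2=\Ima d_3$. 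This identification is the step I expect to be the main obstacle: one must push the relations a)--e) of $D(L,L)$ through $\rho$, verify that c) and d) become trivial in $\Lambda^2L$, that a) (equivalently b)) reproduces $d_3(x\wedge y\wedge z)$, and that the Jacobi-type relation e) also lands in $\Ima d_3$, all while keeping the sign bookkeeping of the differential under control. Granting this, $\Ker\theta_{L,L}\cong\Ker d_2/\Ima d_3=\HH_2^{\alpha}(L,\alpha_L)$.
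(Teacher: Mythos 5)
Your proposal is correct, and it takes a genuinely different, far more self-contained route than the paper. The paper's entire proof is: (i)~for the equality $L\star L=L\curlywedge L$, the generators $\ell\star\ell$ of $L\Box L$ vanish via the bracket~\eqref{pr_in_tensor} (writing $\ell=\sum_i[a_i,b_i]$ gives $\ell\star\ell=[u,u]=0$ with $u=\sum_i a_i\star b_i$; your pairing of relations c) and d) is the same cancellation carried out by hand); (ii)~for the universality of $\theta_{L,L}$ and for $\Ker(\theta_{L,L})\cong\HH_2^{\alpha}(L,\alpha_L)$ it simply cites Theorems~4.3 and~4.4 of~\cite{CaKhPa}. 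So your second and third paragraphs are reconstructions of the cited theorems rather than of anything proved in this paper. Your lifting argument is the standard one and is sound with Definition~\ref{alfacentral} of centrality; note that the Hom-Lie pathologies alluded to in the introduction (e.g.\ composites of central extensions failing to be central) never enter, because you construct and compare the lift directly for each central extension, using only perfectness of $L\curlywedge L$, which you establish. As for the step you flag as the main obstacle, it does resolve positively, and in a slightly cleaner form than you pose it: one has $\Ker\rho=\Ima d_3$ outright, so no intersection with $\Ker d_2$ is needed (automatically $\Ima d_3\subseteq\Ker d_2$ since $(C^{\alpha}_{\bullet}(L),d_{\bullet})$ is a complex). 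Indeed, generators c) and d) of $D(L,L)$ die in $\Lambda^2 L$ by skew-symmetry; generator a) maps to $[m,m']\wedge\alpha_L(n)+[m',n]\wedge\alpha_L(m)-[m,n]\wedge\alpha_L(m')$, which is precisely, up to overall sign, the spanning element $d_3(m\wedge m'\wedge n)$ once the alternating signs $(-1)^{i+j}$ of Yau's differential (omitted in the paper's display of $d_n$) are restored; and generators b) and e) have the same shape, hence also lie in $\Ima d_3$. The trade-off is clear: the paper buys brevity by outsourcing to the literature, while your argument keeps the universal central extension theory and the Hopf-type identification internal to the proof, at the price of the sign bookkeeping you anticipated and the routine verification that the lift $g$ kills relations a)--e).
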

\begin{proof}
	By Equation~\eqref{pr_in_tensor}, when $(L, \alpha_L)$ is perfect the ideal $L \Box L$ is zero and therefore $(L \curlywedge L, \alpha_{L \curlywedge L}) = (L \star L, \alpha_{L \star L})$. Then, the second part is an immediate consequence of Theorem~4.3 and Theorem~4.4 in~\cite{CaKhPa}.
\end{proof}

\begin{lemma} \label{1}
	If $(N, \alpha_N)$ is an ideal of a Hom-Lie algebra $(L, \alpha_L)$, then the following induced sequence of Hom-Lie algebras 
	\[
	\xymatrix{
		(N \curlywedge L, \alpha_{N \curlywedge L}) \ar[r] & (L \curlywedge L, \alpha_{L \curlywedge L}) \ar[r]^-{\pi \curlywedge \pi} & (\frac{L}{N} \curlywedge \frac{L}{N}, \overline{\alpha}_{L \curlywedge L}) \ar[r] & 0
	}
	\]
	is exact.
\end{lemma}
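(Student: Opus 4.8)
The plan is to prove right exactness by exhibiting the third term as the cokernel of the first map. Write $\iota\colon(N\curlywedge L,\alpha_{N\curlywedge L})\to(L\curlywedge L,\alpha_{L\curlywedge L})$ for the homomorphism induced by the inclusion $N\hookrightarrow L$ (and the identity on the second factor), so that $\iota(n\curlywedge l)=n\curlywedge l$, and set $J=\Ima(\iota)$, the linear span of $\{\,n\curlywedge l\mid n\in N,\ l\in L\,\}$. Two of the three required facts are immediate. Surjectivity of $\pi\curlywedge\pi$ holds because the generators $\overline l\curlywedge\overline{l'}$ span $L/N\curlywedge L/N$ and each is $(\pi\curlywedge\pi)(l\curlywedge l')$. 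The inclusion $J\subseteq\Ker(\pi\curlywedge\pi)$ (equivalently, the vanishing of the composite) is equally direct, since $(\pi\curlywedge\pi)(n\curlywedge l)=\pi(n)\curlywedge\pi(l)=0$ because $\pi(n)=0$.

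Before constructing the inverse I would record that $J$ is an ideal of $L\curlywedge L$. By Equation~\eqref{exterior ideal} we have $L\curlywedge L=(L\star L)/\{\,l\star l\mid l\in L\,\}$, and since the generating map is bilinear, expanding $(l+l')\star(l+l')$ shows $a\curlywedge b=-b\curlywedge a$ in $L\curlywedge L$ for all $a,b\in L$. Using the bracket formula~\eqref{pr_in_tensor} together with the fact that $N$ is an ideal, for generators we obtain $[a\curlywedge b,\,n\curlywedge l]=-[b,a]\curlywedge[n,l]=[n,l]\curlywedge[b,a]$, whose first factor $[n,l]$ lies in $N$; hence this bracket lies in $J$, and by bilinearity $[L\curlywedge L,J]\subseteq J$. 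In particular $(L\curlywedge L)/J$ is again a Hom-Lie algebra.

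The heart of the argument is the reverse inclusion $\Ker(\pi\curlywedge\pi)\subseteq J$, which I would obtain by showing the induced map $\overline{\pi\curlywedge\pi}\colon(L\curlywedge L)/J\to L/N\curlywedge L/N$ is an isomorphism. To produce its inverse I would invoke the universal property of the exterior product applied to the bilinear map
\[
h'\colon L/N\times L/N\to(L\curlywedge L)/J,\qquad h'(\overline l,\overline{l'})=\overline{l\curlywedge l'},
\]
where the outer bar denotes the class modulo $J$. First one checks that $h'$ is well defined: if $\overline l=\overline{l_1}$ then $l-l_1\in N$, so by bilinearity $l\curlywedge l'-l_1\curlywedge l'=(l-l_1)\curlywedge l'\in J$, and symmetrically in the second slot; this is exactly where the ideal hypothesis on $N$ enters. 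Then one verifies that $h'$ is an exterior Hom-Lie pairing, regarding $L/N$ as an ideal of itself. Writing $h\colon L\times L\to L\curlywedge L$ for the universal pairing and $q$ for the quotient onto $(L\curlywedge L)/J$, the composite $q\circ h$ is an exterior pairing and $q\circ h=h'\circ(\pi\times\pi)$; since $\pi$ is a surjective homomorphism intertwining the bracket, the twist and the self-action (so that $\pi[l,l']=[\overline l,\overline{l'}]$, $\pi\alpha_L=\alpha_{L/N}\pi$ and ${}^{\overline l}\overline{l'}=[\overline l,\overline{l'}]$), the pairing axioms a), b), c) and e) for $h'$ descend from those of $q\circ h$. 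The only axiom requiring a separate argument is d): if $\overline m=\overline n$, writing $m=n+n_0$ with $n_0\in N$ gives $m\curlywedge n=n\curlywedge n+n_0\curlywedge n=n_0\curlywedge n\in J$, whence $h'(\overline m,\overline n)=0$.

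The universal property then yields a unique homomorphism $\sigma\colon L/N\curlywedge L/N\to(L\curlywedge L)/J$ with $\sigma(\overline l\curlywedge\overline{l'})=\overline{l\curlywedge l'}$. Checking both composites on generators gives $\overline{\pi\curlywedge\pi}\circ\sigma=\id$ and $\sigma\circ\overline{\pi\curlywedge\pi}=\id$, so $\overline{\pi\curlywedge\pi}$ is an isomorphism; its injectivity says precisely $\Ker(\pi\curlywedge\pi)=J=\Ima(\iota)$, which together with the first paragraph gives the asserted exactness. I expect the main obstacle to be these middle steps: verifying that $h'$ is well defined and that it satisfies the exterior Hom-Lie pairing axioms, especially the representative-independent vanishing in axiom d). These are exactly the places where the hypothesis that $N$ is an ideal is essential, since they rely on absorbing $N$-valued terms and the diagonal relation $m\curlywedge m=0$ into $J$, and they must be checked by hand rather than formally.
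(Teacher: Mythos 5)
Your proof is correct, but it takes a genuinely different route from the paper's. The paper disposes of this lemma in two lines: it invokes the known right-exact sequence for the non-abelian \emph{tensor} product of Hom-Lie algebras, \cite[Proposition~3.12]{CaKhPa}, whose leftmost term is a semidirect product built from both $N \star L$ and $L \star N$, and then observes that after passing to the quotients defining the exterior products the relation $x \curlywedge y = -y \curlywedge x$ allows one to erase one of the two factors, leaving $N \curlywedge L$. You instead prove everything from scratch: you identify $J = \Ima\big((N \curlywedge L) \to (L \curlywedge L)\big)$ as an ideal, define the pairing $h'(\overline{l},\overline{l'}) = \overline{l \curlywedge l'}$ into $(L \curlywedge L)/J$, verify it is an exterior Hom-Lie pairing for the crossed module $\id \colon L/N \to L/N$ (axiom d) being exactly where the ideal hypothesis and the diagonal relation $m \curlywedge m = 0$ enter), and use the universal property of the exterior pairing to produce an inverse of the induced map, so that $\Ker(\pi \curlywedge \pi) = J$. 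Your argument buys self-containedness and makes visible precisely where each hypothesis is used --- including the same antisymmetry relation the paper's proof hinges on --- at the cost of length; the paper's proof is shorter but rests entirely on the external tensor-product result. Two small points you should add for completeness: $J$ is invariant under $\alpha_{L \curlywedge L}$ (because $\alpha_L(N) \subseteq N$), which is part of being an ideal in the Hom-Lie sense and is what makes $(L \curlywedge L)/J$ a Hom-Lie algebra; and your descent of axioms a), b), c), e) from $q \circ h$ to $h'$ implicitly uses that the quotient map $q$ commutes with the twists, which follows from that same invariance.
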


\begin{proof}
	It is a special case of the exact sequence of~\cite[Proposition~3.12]{CaKhPa}. After taking quotients, we can use the relation $x \wedge y = - y \wedge x$, which holds in $L \wedge L$ to erase one of the factors of the semi-direct product.
\end{proof}

Free Hom-Lie algebras were constructed in~\cite{CaGM}, where the following adjoint functors were obtained:
\[
\xymatrix{ \HomSet \ar@<1ex>[r]^{\mathcal{F}_r} \ar@{}[r]|-{\bot} & \ar@<1ex>[l]^{\mathcal{U}} \HomLie}
\]
where $\HomSet$ is the category of sets with a chosen endomorphism, $\mathcal{F}_r$ is the functor that assigns to a Hom-set $(X, \alpha_X)$ the free Hom-Lie algebra $\mathcal{F}_r(X, \alpha_X)$ and $\mathcal{U}$ is the functor that assigns to a Hom-Lie algebra $(B,\alpha_B)$ the Hom-set obtained by forgetting the operations. Note that since $\HomLie$ is a variety, it is monadic over $\Set$ and therefore it has enough projectives.

On the other hand, since $\HomLie$ is semi-abelian the quotient
\begin{equation} \label{multiplier}
	\frac{(R, \alpha_R) \cap [(F,\alpha_F), (F, \alpha_F)]}{[(F, \alpha_F), (R, \alpha_R)]}
\end{equation}
is a Baer invariant~\cite[Theorem 6.9]{EvVdL}, i.e., it doesn't depend on the chosen free presentation $0 \to (R, \alpha_R) \to (F, \alpha_F) \overset{\rho} \to (G, \alpha_G) \to 0$. This is the \emph{Schur multiplier} of the Hom-Lie algebra $(G, \alpha_G)$ and we denote it as $\mathcal{M}(G, \alpha_G)$.

\begin{lemma}[\cite{CaVdL}] \label{ext perfect}
	Let $(L, \alpha_L)$ be a Hom-Lie algebra. For any central extension
	\[
	0 \to (N, \alpha_N) \to (G, \alpha_G) \overset{\pi} \to (L, \alpha_L) \to 0
	\]
	the extension 
	\[
	0 \to (N, \alpha_N) \cap [(G, \alpha_G), (G, \alpha_G)] \to [(G, \alpha_G), (G, \alpha_G)] \to ([L, L], \alpha_{L\mid}) \to 0
	\]
	is also central. Moreover, if $(L, \alpha_L)$ is perfect the commutator $[(G, \alpha_G), (G, \alpha_G)]$ is also perfect.
\end{lemma}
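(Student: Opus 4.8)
\noindent
The plan is to restrict the projection $\pi$ to the derived subalgebra and to exploit only two facts: the bilinearity of the bracket and the defining property of a central extension, namely that $[N, G] = 0$. Notably, the Hom-Jacobi identity plays no role, so the argument runs exactly as in the classical Lie (or group) setting; the whole task reduces to careful bookkeeping of the twist.

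First I would record that $\pi$ restricts to a morphism $\pi|\colon ([G,G], \alpha_{G\mid}) \to ([L,L], \alpha_{L\mid})$. Since $\alpha_G$ preserves both $N$ and $[G,G]$ (the latter because $\alpha_G[x,y]=[\alpha_G(x),\alpha_G(y)]$), the intersection $N \cap [G,G]$ is an $\alpha_G$-invariant ideal of $[G,G]$. Surjectivity of $\pi$ gives $\pi([G,G]) = [\pi(G), \pi(G)] = [L,L]$, while $\Ker(\pi|) = \Ker(\pi) \cap [G,G] = N \cap [G,G]$, which establishes exactness of the displayed sequence. Centrality is then immediate: because $N \cap [G,G] \subseteq N$ and the original extension is central,
\[
[N \cap [G,G], [G,G]] \subseteq [N, G] = 0.
\]
As $N \cap [G,G]$ is $\alpha$-invariant, this Higgins-commutator condition upgrades to membership in the centre of $([G,G], \alpha_{G\mid})$, so the restricted extension is central.

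For the second assertion, suppose $(L,\alpha_L)$ is perfect, so that $L = [L,L]$. Then $\pi([G,G]) = [L,L] = L$, meaning $\pi|$ is already surjective onto $L$; consequently every $g \in G$ agrees with some element of $[G,G]$ modulo $\Ker(\pi) = N$, which yields the decomposition $G = [G,G] + N$. Expanding by bilinearity and discarding every term carrying a factor from the central ideal $N$, I obtain
\[
[G,G] = [\,[G,G] + N,\ [G,G] + N\,] = [\,[G,G],[G,G]\,],
\]
since $[[G,G], N] = [N, [G,G]] = [N,N] = 0$. Hence $[G,G]$ is perfect.

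I do not expect a genuine obstacle here; the only points requiring care are in the Hom-setting, namely verifying that $\alpha_G$ restricts correctly to each subalgebra involved and confirming that centrality (in the precise form $[N,G]=0$) does force all the mixed brackets to vanish when $[G,G]$ is expanded. Because the derived subalgebra is automatically an ideal, the well-known subtlety that Higgins and Huq commutators may differ in $\HomLie$ does not interfere with either step.
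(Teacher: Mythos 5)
Your proof is correct, but it necessarily takes a different route from the paper, because the paper offers no proof at all: Lemma~\ref{ext perfect} is imported wholesale from~\cite{CaVdL}, where it is established in the generality of semi-abelian categories by abstract commutator calculus. Your argument is instead a direct, element-wise verification inside $\HomLie$: restriction of $\pi$ to $[G,G]$, the inclusion $[N\cap[G,G],[G,G]]\subseteq[N,G]=0$, the decomposition $G=[G,G]+N$ when $L$ is perfect, and bilinear expansion killing all mixed brackets. All the steps check out, including the two points genuinely specific to the Hom-setting: the $\alpha_G$-invariance of $N\cap[G,G]$ (needed because the paper's centre involves all powers $\alpha^k$, so that $[\alpha^k(x),y]\in[N\cap[G,G],[G,G]]=0$ requires invariance) and your observation that in $\HomLie$ the paper's notion of centrality is exactly $[N,G]=0$ for the Higgins commutator, so no Huq-versus-Higgins issue arises since $[G,G]$ and $N\cap[G,G]$ are honest ideals. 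What each approach buys: the citation to~\cite{CaVdL} gives brevity and situates the lemma as an instance of a theorem valid in every semi-abelian category, whereas your computation is self-contained, requires only bilinearity, skew-symmetry, multiplicativity of the twist and $[N,G]=0$, and makes transparent that the Hom-Jacobi identity is never used --- information the categorical proof does not surface.
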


Let $(L, \alpha_L)$ be a Hom-Lie algebra and let
\[
0 \to (S, \alpha_S) \to (F, \alpha_F) \overset{\tau} \to (L, \alpha_L) \to 0
\]
be a free presentation. Then $([F,S], \alpha_{F \mid})$ is an ideal of $(F, \alpha_F)$, so there exists a surjective homomorphism 
\[
\overline{\tau} \colon \frac{(F, \alpha_F)}{([F,S], \alpha_{F \mid})} \twoheadrightarrow (L, \alpha_L),\qquad \overline{\tau}(f+[F,S]) = \tau(f).
\]
Moreover, %$\Ker(\overline{\tau}) = \dfrac{(S, \alpha_S)}{([F,S], \alpha_{F \mid})}$ and
\begin{equation} \label{central extension}
	0 \to \left( \frac{S}{[F, S]}, \widetilde{\alpha}_S \right) \to \left( \frac{F}{[F, S]}, \widetilde{\alpha}_F \right) \overset{\overline{\tau}}\to (L, \alpha_L) \to 0
\end{equation}
is a central extension. By Lemma~\ref{ext perfect}, the following extension
\begin{equation*}
	0 \to \frac{(S, \alpha_S)}{([F,S], \alpha_{F \mid})} \bigcap \left( \frac{[F, F]}{[F,S]}, \alpha_{F \mid}\right) \to \left(\frac{[F, F]}{[F, S]}, \widetilde{\alpha}_{F\mid} \right)  \overset{{\widetilde{\overline{\tau}}}}\to ([L,L],\alpha_{L \mid}) \to 0
\end{equation*} 
is also central. If in particular $(L, \alpha_L)$ is perfect, the commutator $\left( \frac{[F, F]}{[F,S]}, \alpha_{F \mid}\right)$ is also perfect and the central extension is universal. Indeed, for any other central extension $0 \to (A, \alpha_A) \to (K, \alpha_K) \overset{\sigma} \to (L, \alpha_L) \to 0$ of $(L, \alpha_L)$, there is a homomorphism of Hom-Lie algebras such that the following diagram commutes
\[ \xymatrix{
	0 \ar[r] & (S, \alpha_S) \ar[r] & (F, \alpha_F) \ar[r]^-{\tau} \ar@{-->}[d]^-{h} & (L, \alpha_L) \ar[r] \ar@{=}[d] & 0\\
	0 \ar[r] & (A, \alpha_A) \ar[r] & (K, \alpha_K) \ar[r]^-{\sigma} & (L, \alpha_L) \ar[r] & 0
} \]
In fact, $h$ induces a homomorphism $\overline{h} \colon \left( \frac{F}{[F,S]}, \overline{\alpha}_F \right) \to (K, \alpha_K)$ such that $\sigma \circ \overline{h} = \overline{\tau}$. The restriction of $\overline{h}$ to $\left( \frac{[F, F]}{[F,S]}, \alpha_{F \mid}\right)$ provides the required homomorphism, which is unique due to~\cite[Lemma 4.7]{CaInPa}.

Since the universal central extension of a perfect Hom-Lie algebra is unique up to isomorphism, then Proposition~\ref{uce exterior} implies that $(L \curlywedge L, \alpha_{L \curlywedge L}) \cong \left( \frac{[F, F]}{[F,S]}, \alpha_{F \mid}\right)$ and $\HH_2^{\alpha}(L, \alpha_L) \cong \left( \frac{S \cap [F, F]}{[F, S]}, \alpha_{F \mid} \right)$ for any perfect Hom-Lie algebra $(L, \alpha_L)$.

\begin{theorem}
	Let $(L, \alpha_L)$ be a perfect Hom-Lie algebra and let 
	\[
	0 \to (S, \alpha_S) \to (F, \alpha_F) \overset{\tau} \to (L, \alpha_L) \to 0
	\]
	be a free presentation of $(L, \alpha_L)$. Then,
	\[
	\HH_2^{\alpha}(L, \alpha_L) \cong \left( \frac{S \cap [F, F]}{[F, S]}, \alpha_{F \mid} \right)
	\]
\end{theorem}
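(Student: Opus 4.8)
The plan is to realise $\left( \frac{[F,F]}{[F,S]}, \alpha_{F\mid} \right)$ as a universal central extension of $(L, \alpha_L)$ and then to compare it with the universal central extension $\theta_{L,L}$ furnished by the non-abelian exterior square, whose kernel is already identified with $\HH_2^{\alpha}(L, \alpha_L)$ in Proposition~\ref{uce exterior}.

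First I would pass from the free presentation to a central extension. As $(S, \alpha_S)$ is an ideal of $(F, \alpha_F)$, the commutator $([F,S], \alpha_{F\mid})$ is an ideal of $(F, \alpha_F)$ lying inside $(S, \alpha_S)$, and quotienting by it produces the extension~\eqref{central extension}, which is central because $\left[ \frac{F}{[F,S]}, \frac{S}{[F,S]} \right] = 0$. Since $(L, \alpha_L)$ is perfect we have $[L,L] = L$, so Lemma~\ref{ext perfect} lets me restrict this extension to the commutator subalgebra, yielding a central extension
\[
0 \to \frac{S}{[F,S]} \cap \frac{[F,F]}{[F,S]} \to \left( \frac{[F,F]}{[F,S]}, \widetilde{\alpha}_{F\mid} \right) \overset{\widetilde{\overline{\tau}}}{\to} (L, \alpha_L) \to 0
\]
whose middle term is again perfect. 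As established in the construction preceding the statement, such a central extension with perfect total space over a perfect Hom-Lie algebra is a universal central extension of $(L, \alpha_L)$.

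Proposition~\ref{uce exterior} provides a second universal central extension $\theta_{L,L} \colon (L \curlywedge L, \alpha_{L \curlywedge L}) \to (L, \alpha_L)$ with $\Ker(\theta_{L,L}) \cong \HH_2^{\alpha}(L, \alpha_L)$. Because the universal central extension of a perfect Hom-Lie algebra is unique up to isomorphism over $(L, \alpha_L)$, the canonical comparison morphism between the two is an isomorphism restricting to an isomorphism of kernels, so
\[
\HH_2^{\alpha}(L, \alpha_L) \cong \Ker(\widetilde{\overline{\tau}}) = \frac{S}{[F,S]} \cap \frac{[F,F]}{[F,S]}.
\]

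It then remains to identify this intersection with the asserted quotient. Since $[F,S] \subseteq S$ (because $S$ is an ideal) and $[F,S] \subseteq [F,F]$ (because $S \subseteq F$), the subspace $[F,S]$ is contained in $S \cap [F,F]$, and the elementary identity for the intersection of two subspaces sharing a common subspace gives $\frac{S}{[F,S]} \cap \frac{[F,F]}{[F,S]} = \frac{S \cap [F,F]}{[F,S]}$, compatibly with the restricted twist $\alpha_{F\mid}$. I expect the only genuinely substantive ingredient to be the uniqueness of the universal central extension, which is precisely what transports the abstract computation $\Ker(\theta_{L,L}) \cong \HH_2^{\alpha}(L, \alpha_L)$ onto the concrete term read off from the free presentation; the centrality checks and the subspace-intersection identity are routine.
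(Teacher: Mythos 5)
Your route coincides with the paper's own: pass from the free presentation to the central extension~\eqref{central extension}, restrict to commutators via Lemma~\ref{ext perfect}, recognise the resulting extension $\widetilde{\overline{\tau}}$ as a universal central extension of $(L,\alpha_L)$, and then use uniqueness of universal central extensions together with Proposition~\ref{uce exterior} to transport $\HH_2^{\alpha}(L,\alpha_L)\cong\Ker(\theta_{L,L})$ onto the kernel read off from the presentation. The endgame is fine, and your explicit identification $\frac{S}{[F,S]}\cap\frac{[F,F]}{[F,S]}=\frac{S\cap[F,F]}{[F,S]}$, which the paper leaves implicit, is correct because $[F,S]\subseteq S\cap[F,F]$.

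The gap is the universality step, which is the actual content of the theorem. You justify it by the principle that a central extension with perfect total space over a perfect Hom-Lie algebra is universal, ``as established in the construction preceding the statement''. No such principle holds, and it is not what that construction establishes: already for Lie algebras the identity extension $0\to 0\to L\to L\to 0$ of a perfect $L$ is central with perfect total space, yet if it were universal then uniqueness of universal central extensions would force $\HH_2(L)\cong 0$; perfectness of the middle term is necessary, never sufficient. Moreover, in the Hom-Lie setting the standard recognition theorems for universal central extensions are precisely what break down (the failure of the universal central extension condition, \cite{CaVdL}, noted in the introduction), so this point cannot be waved through. What actually forces universality of $\widetilde{\overline{\tau}}$ is the freeness of $F$, and this is the argument the paper gives: for any central extension $0\to(A,\alpha_A)\to(K,\alpha_K)\overset{\sigma}{\to}(L,\alpha_L)\to 0$, freeness yields a lift $h\colon(F,\alpha_F)\to(K,\alpha_K)$ with $\sigma\circ h=\tau$; since $h(S)\subseteq\Ker(\sigma)$ and the extension is central, $h([F,S])\subseteq[K,\Ker(\sigma)]=0$, so $h$ descends to $\overline{h}\colon \left(\frac{F}{[F,S]},\overline{\alpha}_F\right)\to(K,\alpha_K)$ over $(L,\alpha_L)$, and its restriction to $\left(\frac{[F,F]}{[F,S]},\alpha_{F\mid}\right)$ is the required comparison morphism, unique by \cite[Lemma 4.7]{CaInPa} because that subalgebra is perfect. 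Without this argument your appeal to uniqueness of universal central extensions has nothing to apply to, so this step must be supplied for the proof to close; once it is, the rest of your proposal goes through as written.
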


\begin{lemma}\label{12}
	If $F$ is a free Hom-Lie algebra, then $F \curlywedge F \cong [F, F]$.
\end{lemma}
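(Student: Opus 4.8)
The plan is to prove that the homomorphism $\theta_{F,F}\colon (F\curlywedge F,\alpha_{F\curlywedge F})\to (F,\alpha_F)$, $f\curlywedge f'\mapsto [f,f']$, is injective with image $([F,F],\alpha_{F\mid})$; together these two facts give the stated isomorphism. Surjectivity onto $[F,F]$ is immediate, since $[F,F]$ is spanned by the brackets $[f,f']=\theta_{F,F}(f\curlywedge f')$, so the whole content is the injectivity of $\theta_{F,F}$. First I would record that $\theta_{F,F}$ sits in a \emph{central} extension
\[
0\to \Ker(\theta_{F,F})\to (F\curlywedge F,\alpha_{F\curlywedge F})\xrightarrow{\theta_{F,F}} ([F,F],\alpha_{F\mid})\to 0.
\]
Indeed, for the map $\psi_M$ of Lemma~\ref{action-on-tensor} one has $\psi_M(m\star n)=-{}^{n}m=[m,n]$, so $\psi_M$ kills $F\Box F$ and induces exactly $\theta_{F,F}$ on the exterior quotient; since $\Ker(\psi_M)$ is central in $F\star F$ by Lemma~\ref{action-on-tensor}(c) and $F\Box F$ is central, the subalgebra $\Ker(\theta_{F,F})$ is central in $F\curlywedge F$.

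Next I would identify this kernel with the Schur multiplier and compute the latter. Because $\mathcal{M}(-)$ is a Baer invariant, it may be evaluated on the trivial free presentation $0\to (0,0)\to (F,\alpha_F)\xrightarrow{\id}(F,\alpha_F)\to 0$, and formula~\eqref{multiplier} then gives
\[
\mathcal{M}(F,\alpha_F)=\frac{0\cap [F,F]}{[F,0]}=0.
\]
Thus, once the comparison $\Ker(\theta_{F,F})\cong \mathcal{M}(F,\alpha_F)$ is in hand, injectivity of $\theta_{F,F}$ — and with it the Lemma — follows at once.

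The hard part will be precisely this last identification for a \emph{non-perfect} algebra. In the material above, the isomorphism $\Ker(\theta_{L,L})\cong \HH_2^{\alpha}(L,\alpha_L)\cong\mathcal{M}(L,\alpha_L)$ is obtained only for perfect $L$, through the universal central extension of Proposition~\ref{uce exterior}, whereas a free Hom-Lie algebra is essentially never perfect. One should not expect to repair this by a naive comparison with Chevalley–Eilenberg homology: the defining relations of $F\curlywedge F$, pulled back along the natural surjection $\Lambda^2 F\twoheadrightarrow F\curlywedge F$, are not visibly the boundaries $\Ima(d_3)$ of the complex $(\Lambda^\bullet F,d)$, in line with the warning in the introduction that the standard Hom-Lie homology is not known to arise from a $\Tor$ functor. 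I would therefore establish $\Ker(\theta_{F,F})\cong\mathcal{M}(F,\alpha_F)$ intrinsically, exploiting that $F$ is projective so that the central extension above can be analysed through a chosen presentation, and adapting the classical Lie-algebra argument (where $\Ker(\theta_{L,L})\cong\HH_2(L)$ holds for \emph{every} $L$, yielding $0$ in the free case). Making this comparison go through without the perfectness hypothesis is the one genuinely delicate step; the surjectivity, the centrality of the kernel, and the vanishing of $\mathcal{M}(F,\alpha_F)$ are all routine.
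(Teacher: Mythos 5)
You have correctly disposed of the routine parts: surjectivity of $\theta_{F,F}$ onto $[F,F]$, centrality of $\Ker(\theta_{F,F})$ (your argument via $\psi_M$ and Lemma~\ref{action-on-tensor} is fine), and the vanishing $\mathcal{M}(F,\alpha_F)=0$, computed from~\eqref{multiplier} applied to the trivial presentation $0 \to (0,0) \to (F,\alpha_F) \to (F,\alpha_F) \to 0$. But the step you yourself call ``the one genuinely delicate step'', namely $\Ker(\theta_{F,F})\cong\mathcal{M}(F,\alpha_F)$, is never proved, and it is not a reduction at all: since $\mathcal{M}(F,\alpha_F)=0$, that isomorphism literally \emph{is} the injectivity of $\theta_{F,F}$, i.e.\ the Lemma itself. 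The difficulty has been renamed, not removed. Worse, the route you gesture at for filling it is circular relative to how this theory is built. In this paper, the identification of $\Ker(\theta_{L,L})$ with a Schur-multiplier/Hopf-type quotient for a general (non-perfect) $(L,\alpha_L)$ comes from Proposition~\ref{equivalence}, whose proof uses Lemmas~\ref{1} \emph{and}~\ref{12}; the only identification available before that point, Proposition~\ref{uce exterior}, requires perfectness, which a nonzero free Hom-Lie algebra never has. The same logical order holds in the classical Lie-algebra theory you propose to adapt: there, the theorem $\Ker(\theta_{L,L})\cong\HH_2(L)$ for all $L$ is \emph{deduced from} the free case by pushing it through the analogue of the right-exact sequence of Lemma~\ref{1} and Hopf's formula. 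So ``adapting the classical argument'' cannot mean invoking that theorem; its proof presupposes exactly the statement you are trying to establish.

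The paper's own proof is a deferral to the strategy of~\cite{Ell2}, where the isomorphism for free algebras is obtained by a direct argument from freeness — working with the central extension $0\to\Ker(\theta_{F,F})\to F\curlywedge F\to [F,F]\to 0$ and the structure of free objects themselves — \emph{prior to}, and independently of, any homological interpretation of the kernel; the kernel-equals-multiplier statement is downstream of this Lemma, not an input to it. Your proposal contains no such argument (and your dismissal of a comparison with the complex computing $\HH^{\alpha}_\ast$ closes off the one other conceivable route, which would anyway require knowing $\HH_2^{\alpha}$ vanishes on free Hom-Lie algebras). As it stands, the proposal establishes only the easy facts and leaves the Lemma unproven.
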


\begin{proof}
	It is done following the same strategy as in~\cite{Ell2}.
\end{proof}

%%%%%%%%%%%%%%%%%%%%%%%%%%%%%%%%%%%%%%%%%%%%%%%%%%%%%%%%%%%%%%%%%%%%%%%%%%%%%%%%%%%%%%%%%%%%%%%%%%%%%%%%%%%%%%%%%%%%%%%%%%%%%%%%%%%%
\section{Stallings-Stammbach exact sequence}\label{S:exactseq}

Let $0 \to (N, \alpha_N) \to (G, \alpha_G) \overset{\pi} \to (L, \alpha_L) \to 0$ be a short exact sequence of Hom-Lie algebras, and let $0 \to (R, \alpha_R) \to (F, \alpha_F) \overset{\rho} \to (G, \alpha_G) \to 0$ be a projective presentation of $(G, \alpha_G)$, we can construct the following diagram of projective presentations:

\begin{equation} \label{free present diagr}
	\begin{gathered}
		\xymatrix{& & 0 \ar[d] & 0 \ar[ld]\\
			& & (R, \alpha_R) \ar[d]\ar[ld] \\
			0 \ar[r] & (S, \alpha_S) \ \ar[r] \ar[d] & (F, \alpha_F) \ar[d]^-{\rho} \ar[rd]^-{\tau = \pi \circ \rho} \\
			0 \ar[r]& (N, \alpha_N) \ar[r] \ar[d]& (G, \alpha_G) \ar[r]^-{\pi} \ar[d]& (L, \alpha_L) \ar[r] \ar[dr] & 0 \\
			& 0 & 0 & & 0
		}
	\end{gathered}
\end{equation}

Based on the $3 \times 3$-lemma, we can write the commutative diagram of Figure~\ref{F:diag} to obtain the following result:

\begin{figure}
	\hspace*{-1.4cm}
	\Small{
		\xymatrix @C=0.001cm{
			& ([F,R], \alpha_{R \mid}) \ar@{>->}[d] \ar@{>->}[rrd]& & & & \\
			& ([F,S] \cap R, \alpha_{R \mid}) \ar@{>>}[d] \ar@{>->}[rr] \ar@{>->}[ddl]& & ([F,F] \cap R, \alpha_{R \mid}) \ar@{>>}[d] \ar@{>>}[drr] \ar@{>->}[ddl]& & \\
			& \left( \frac{[F,S] \cap R}{[F,R]}, \alpha_{R \mid} \right) \ar@{>->}[rr] \ar@{>->}[dd]& & { \left( \frac{[F,F] \cap R}{[F,R]}, \alpha_{R \mid} \right)} \ar@{>>}[rr] \ar@{>->}[dd]& & \left( \frac{[F,F] \cap R}{[F,S] \cap R}, \alpha_{R \mid} \right) \ar@{>->}[dd] \ar@{>->}[dl]\\
			([F,S], \alpha_{S \mid}) \ar@{>->}[rr] \ar@{>>}[dd]& & ([F,F] \cap S, \alpha_{S \mid}) \ar@{>>}[rr] \ar@{>>}[ldddd]& & { \left( \frac{[F,F] \cap S}{[F,S]}, \widetilde{\alpha}_{S \mid} \right)} \ar@{>>}[llddddd] & \\
			& \bullet \ar@{>>}[ddd] \ar@{>->}[rr]& & \left( \frac{[F,F]}{[F,R]}, \widetilde{\alpha}_{F \mid} \right) \ar@{>>}[ddd] \ar@{>>}[rr] & & \bullet \ar@{>>}[dd] \\
			\left( [N,G], \alpha_{N \mid} \right) \ar@{}[d]|{\rcong}  & & & & & \\
			\left( \frac{[F,S]}{[F,S] \cap R}, \widetilde{\alpha}_{R \mid} \right) \ar@{>->}[rd] \ar@{>->}[rddd] 	& & & & & ([Q, Q], \alpha_{Q \mid}) \ar@{}[d]|{\rcong} \\
			& (N \cap [G,G], \alpha_{N \mid}) \ar@{>->}[rr] \ar@{>->}[dd] \ar@{>>}[rd]& & ([G,G], \alpha_{G \mid}) \ar@{>>}[rr] \ar@{>->}[dd] & &  \cong \left( \frac{[G,G]}{N \cap [G,G]}, \widetilde{\alpha}_{G \mid} \right) \ar@{>->}[dd] \\
			& & \left(\frac{N \cap [G,G]}{[G,N]}, \alpha_{N \mid} \right) \ar@{>->}[dd]& & & \\
			& (N, \alpha_N) \ar@{>->}[rr] \ar@{>>}[dd] \ar@{>>}[rd]& & (G, \alpha_G) \ar@{>>}[rr] \ar@{>>}[dd]& & (Q, \alpha_Q) \ar@{>>}[dd] \\
			& & { \left( \frac{N}{[G,N]},\widetilde{\alpha}_N \right)} \ar@{>>}[ld]& & & \\
			& \left( \frac{N}{N \cap [G, G]},\alpha_{N \mid} \right) \ar@{>->}[rr] & & { \left( \frac{G}{[G,G]}, \alpha_{G \mid} \right)} \ar@{>>}[rr] & & { \left(\frac{Q}{[Q,Q]}, \alpha_{Q \mid}\right)}
	} }
	\caption{}\label{F:diag}
\end{figure}

\begin{theorem}
	Let $0 \to (N, \alpha_N) \to (G, \alpha_G) \overset{\pi} \to (L, \alpha_L) \to 0$ be a short exact sequence of Hom-Lie algebras. There exists the following natural exact sequence:
	\begin{equation}\label{5-term}
		\mathcal{M}(G, \alpha_G) \to \mathcal{M}(L, \alpha_L) \to \left( \frac{N}{[G,N]},\widetilde{\alpha}_N \right) \to \HH^{\alpha}_1(G, \alpha_G) \to \HH^{\alpha}_1(L, \alpha_L) \to 0
	\end{equation}
\end{theorem}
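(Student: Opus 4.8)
The plan is to extract the exact sequence~\eqref{5-term} directly from the large commutative diagram in Figure~\ref{F:diag}, which was assembled precisely so that the relevant rows and columns are short exact. The central observation is that the five terms in~\eqref{5-term} correspond to subquotients appearing along a path through the diagram, and the maps between them are induced by the evident inclusions and projections coming from the projective presentation data in~\eqref{free present diagr}. First I would identify each term with an object in the diagram: using the Schur multiplier description~\eqref{multiplier}, $\mathcal{M}(G,\alpha_G)$ is computed from the presentation $0 \to (R,\alpha_R) \to (F,\alpha_F) \to (G,\alpha_G) \to 0$ as $\bigl(\frac{R \cap [F,F]}{[F,R]}, \alpha_{R\mid}\bigr)$, while $\mathcal{M}(L,\alpha_L)$ is computed from the composite presentation $\tau = \pi\circ\rho$ as $\bigl(\frac{S \cap [F,F]}{[F,S]}, \alpha_{F\mid}\bigr)$, since $(S,\alpha_S)$ is the kernel of $\tau$. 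The term $\bigl(\frac{N}{[G,N]},\widetilde{\alpha}_N\bigr)$ appears explicitly near the bottom of the diagram, and the last two terms are the abelianisations $\HH_1^\alpha(G,\alpha_G) = \frac{G}{[G,G]}$ and $\HH_1^\alpha(L,\alpha_L) = \frac{L}{[L,L]} \cong \frac{Q}{[Q,Q]}$ (so $(Q,\alpha_Q) \cong (L,\alpha_L)$), recorded in the bottom row.

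Next I would read off the connecting maps. The map $\mathcal{M}(G,\alpha_G) \to \mathcal{M}(L,\alpha_L)$ is induced by the inclusion $([F,R] \subseteq [F,S]$ together with $R \subseteq S$), realised through the leftmost vertical short exact columns of the diagram; the map $\mathcal{M}(L,\alpha_L) \to \bigl(\frac{N}{[G,N]},\widetilde{\alpha}_N\bigr)$ factors through the isomorphism $\bigl(\frac{[F,S]}{[F,S]\cap R},\widetilde{\alpha}_{R\mid}\bigr) \rcong ([N,G], \alpha_{N\mid})$ marked in the diagram, and then through the quotient $\frac{N \cap [G,G]}{[G,N]} \hookrightarrow \frac{N}{[G,N]}$. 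The map $\bigl(\frac{N}{[G,N]},\widetilde{\alpha}_N\bigr) \to \HH_1^\alpha(G,\alpha_G)$ is induced by the natural map $N \to G$ composed with the projection onto $\frac{G}{[G,G]}$ (well-defined since $[G,N] \subseteq [G,G]$), and the final map $\HH_1^\alpha(G) \to \HH_1^\alpha(L)$ is simply $\overline{\pi}$, the abelianisation of $\pi$, which is surjective because $\pi$ is. Exactness at this last spot and surjectivity are immediate from right-exactness of abelianisation applied to $0 \to (N,\alpha_N) \to (G,\alpha_G) \to (L,\alpha_L) \to 0$.

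The core of the argument is then a diagram chase verifying exactness at the three interior terms. Exactness at $\HH_1^\alpha(G,\alpha_G)$ reduces to the standard right-exact sequence $\frac{N}{[G,N]} \to \frac{G}{[G,G]} \to \frac{L}{[L,L]} \to 0$, whose kernel is the image of $N$, i.e.\ the image of $\frac{N+[G,G]}{[G,G]}$; exactness here follows by tracking $N \cap [G,G]$ through the column $\bigl(\frac{N\cap[G,G]}{[G,N]}\bigr) \hookrightarrow \bigl(\frac{N}{[G,N]}\bigr) \twoheadrightarrow \bigl(\frac{N}{N\cap[G,G]}\bigr)$ that sits inside the lower-left block of the diagram. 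Exactness at $\bigl(\frac{N}{[G,N]},\widetilde{\alpha}_N\bigr)$ and at $\mathcal{M}(L,\alpha_L)$ is where the full force of Figure~\ref{F:diag} is used: the images and kernels are matched by following the three interlocking $3\times 3$-squares, each of which is short exact in both directions by the $3\times 3$-lemma (available since $\HomLie$ is semi-abelian).

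I expect the main obstacle to be the verification of exactness at the middle term $\bigl(\frac{N}{[G,N]},\widetilde{\alpha}_N\bigr)$, because this is exactly the place where the Baer-invariant nature of the Schur multiplier must be invoked to guarantee that the identification of $\mathcal{M}(L,\alpha_L)$ with $\bigl(\frac{S\cap[F,F]}{[F,S]},\alpha_{F\mid}\bigr)$ is compatible with the connecting map independently of the chosen presentation, and because one must carefully confirm that the twist maps $\alpha$ on all the intermediate subquotients agree under the induced identifications. Concretely, the subtlety is showing that the image of $\mathcal{M}(L,\alpha_L)$ equals the kernel of $\bigl(\frac{N}{[G,N]}\bigr) \to \HH_1^\alpha(G)$, which amounts to the identity $\ker\bigl(\tfrac{N}{[G,N]} \to \tfrac{G}{[G,G]}\bigr) = \tfrac{N \cap [G,G]}{[G,N]}$ being precisely the image of $\tfrac{S\cap[F,F]}{[F,S]}$ under the isomorphism $\tfrac{[F,S]}{[F,S]\cap R} \cong [N,G]$; this is the technical heart of the Stallings--Stammbach argument and is what the large diagram is designed to make transparent once all objects and maps are correctly labelled.
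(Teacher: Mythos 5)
Your proposal is correct and follows essentially the same route as the paper, whose entire proof consists of invoking the $3\times 3$-lemma to assemble the commutative diagram of Figure~\ref{F:diag} and reading the sequence~\eqref{5-term} off it; your identification of the terms (via the Baer invariant~\eqref{multiplier} and the two presentations with kernels $(R,\alpha_R)$ and $(S,\alpha_S)$), of the connecting maps, and of the interlocking short exact squares is precisely what that diagram encodes. In fact your writeup supplies more of the diagram chase than the paper itself records.
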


\begin{remark}
	If $\alpha_G = {\id}_G$ and $\alpha_L = {\id}_L$, sequence~\eqref{5-term} is the Stallings-Stammbach exact sequence associated to a short exact sequence of Lie algebras~\cite{HiSt}.
\end{remark}

If $0 \to (N, \alpha_N) \to (G, \alpha_G) \overset{\pi} \to (L, \alpha_L) \to 0$ is a central extension of Hom-Lie algebras, then sequence~~\eqref{5-term} gives rise to the following natural exact sequence
\begin{equation*}% \label{5-term central}
	\mathcal{M}(G, \alpha_G) \to \mathcal{M}(L, \alpha_L) \to \left( N,\alpha_N \right) \to \HH^{\alpha}_1(G, \alpha_G) \to \HH^{\alpha}_1(L, \alpha_L) \to 0.
\end{equation*}
Moreover the kernel of $\mathcal{M}(G, \alpha_G) \to \mathcal{M}(L, \alpha_L)$ is $\left( \frac{[F,S]}{[F,R]}, \widetilde{\alpha}_{S \mid} \right)$, since $[F, S] \subseteq R$ under the centrality condition.

Since $(N, \alpha_N)$ and $\left( \frac{G}{[G,G]}, \alpha_{G \mid} \right)$ are abelian Hom-Lie algebras, we can construct the Hom-vector space $\left( N \otimes \frac{G}{[G,G]}, \alpha_{\otimes} \right)$, where $\alpha_{\otimes}(n \otimes \overline{g}) = \alpha_N(n) \otimes \overline{\alpha}_G(\overline{g})$. Therefore there is a well-defined surjective homomorphism 
\[
\varphi \colon \left(N \otimes \frac{G}{[G,G]}, \alpha_{\otimes} \right) \to \left( \frac{[F,S]}{[F,R]}, \widetilde{\alpha}_{S \mid} \right), \qquad \varphi(n \otimes \overline{g}) = [f,s] + [F,R],
\]
where $\rho(s) =n, \rho(f) =g, s \in S, f \in F$. The composition 
\[
G\colon \left( N \otimes \frac{G}{[G,G]}, \alpha_{\otimes} \right) \overset{\varphi} \twoheadrightarrow \left( \frac{[F,S]}{[F,R]}, \widetilde{\alpha}_{S \mid} \right) \hookrightarrow \mathcal{M}(G, \alpha_G)
\]
gives rise to the following six-term exact sequence:
\begin{equation}\label{6-term central}
	\begin{tikzpicture}[baseline=(current  bounding  box.center)]
		\matrix(m) [matrix of nodes,row sep=1em, column sep=2em, text height=2.8ex, text depth=1.5ex]
		{
			$\bigg(N \otimes \frac{G}{[G,G]}, \alpha_{\otimes}\bigg)$  & $\mathcal{M}(G, \alpha_G)$  & $\mathcal{M}(L, \alpha_L)$ & \mbox{}  \\
			\mbox{}	& $(N,\alpha_N)$  & $\HH^{\alpha}_1(G, \alpha_G)$ & $\HH^{\alpha}_1(L, \alpha_L)$ & 0 \\};
		\path[overlay,->, font=\scriptsize,>=angle 90]	
		(m-1-1) edge (m-1-2)
		(m-1-2) edge (m-1-3)
		(m-1-3) edge [out=355,in=175] (m-2-2)
		(m-2-2) edge  (m-2-3)
		(m-2-3) edge (m-2-4)
		(m-2-4) edge (m-2-5)
		;
	\end{tikzpicture}
\end{equation}

%%%%%%%%%%%%%%%%%%%%%%%%%%%%%%%%%%%%%%%%%%%%%%%%%%%%%%%%%%%%%%%%%%%%%%%%%%%%%%%%%%%%%%%%%%%%%%%%%%%%%%%%%%%%%%%%%%%%%%%%%%%%%%%%%%%%

\section{Capable Hom-Lie algebras}\label{S:capable}
\begin{definition}
	A Hom-Lie algebra $(L, \alpha_L)$ is said to be \emph{capable} if there exists a Hom-Lie algebra $(H, \alpha_H)$ such that $L \cong H/Z(H)$.
\end{definition}

When $\alpha_L = {\id}_L$ and $\alpha_H = {\id}_H$ the above definition recovers the notion of capable Lie algebra in~\cite{SaAlMo}. It is well-known that capability of groups (respectively, Lie algebras) is closely related with the group of the inner automorphisms (respectively, the Lie algebra of the inner derivations). Let us recall some notions concerning derivations from~\cite{Sheng}. We denote by $\alpha^k$ the composition of $\alpha$ with itself $k$ times.

\begin{definition}
	An \emph{$\alpha^k$-derivation} of a Hom-Lie algebra $(L, \alpha_L)$ is a linear map $d \colon L \to L$ such that
	\begin{enumerate}
		\item[a)] $ d \circ \alpha_L = \alpha_L \circ d$,
		\item[b)] $d[x,y] = [d(x), \alpha^k(y)] + [\alpha^k(x), d(y)]$, for all $x, y \in L$.
	\end{enumerate}
\end{definition}

We denote by $\Der_{\alpha^k}(L)$ the set of all $\alpha^k$-derivations of~$(L, \alpha_L)$. 
The algebra 
\[
\Der(L) = \bigoplus_{k \geq 0} {\Der}_{\alpha^k}(L)
\] 
is a Hom-Lie algebra with respect to the usual bracket operation $[d, d'] = d \circ d' - d' \circ d$ and the endomorphism $\widetilde{\alpha} \colon {\Der}(L) \to {\Der}(L)$ given by $\widetilde{\alpha}(d) = \alpha \circ d$.

For any Hom-Lie algebra $(L, \alpha_L)$ satisfying the $\alpha$-identity condition (Definition~\ref{alpha identity condition}) and $x \in L$, we define $d_k(x) \colon L \to L$ by $d_k(x) (y) = [\alpha^k(x), y]$. 
Note that in this setting it holds that $d_k(x)(y) = [x, y]$, for any $k \in \mathbb{N}$, and therefore $d_k(x) = d_h(x) = [x, -]$ for any $k, h \in \mathbb{N}$.
Then $d_k(x) \in \Der_{\alpha^{k+1}}(L)$, which is called an inner $\alpha^{k+1}$-derivation. We denote by $\InnDer_{\alpha^k}(L)$ the set of all inner $\alpha^k$-derivations, and 
\[
\InnDer(L) = \bigoplus_{k \geq 0} \InnDer_{\alpha^k}(L)
\] 
is an ideal of ${\Der}(L)$. 

There is a homomorphism of Hom-Lie algebras 
\[
\varphi \colon L \to {\Der}(L), \quad \varphi(x) = (d_0(x), d_1(x), \dots, d_k(x), \dots )
\] 
such that $\Ima(\varphi) = \InnDer(L)$ and $\Ker(\varphi) = Z(L)$. 
This homomorphism shows that if a Hom-Lie algebra $(L, \alpha_L)$ satisfying the $\alpha$-identity condition (Definition~\ref{alpha identity condition}) is isomorphic to inner derivations of some Hom-Lie algebra $(H, \alpha_H)$ that satisfies the $\alpha$-identity condition, then $(L, \alpha_L)$ is capable, i.e.,  we can obtain the following exact sequence 
%This homomorphism shows that a Hom-Lie algebra $(L, \alpha_L)$ satisfying the $\alpha$-identity condition (Definition~\ref{alpha identity condition}) is capable if and only if it is isomorphic to inner derivations of some Hom-Lie algebra $(H, \alpha_H)$ that satisfies the $\alpha$-central identity condition, i.e., we can obtain the following exact sequence 
\[
0 \to (Z(H), \alpha_{H \mid}) \to (H, \alpha_H) \overset{\varphi}\to (\InnDer(H), \widetilde{\alpha}_H) \cong (L, \alpha_L) \to 0.
\]

\begin{definition}
	Let $(L, \alpha_L)$ be a Hom-Lie algebra.
	The \emph{tensor centre} of $(L, \alpha_L)$ is the set:
	\[
	Z_{\alpha}^{\star}(L) = \{ l \in L \mid \alpha^k(l) \star x =0, ~ {\rm for ~ all} ~x \in L, k \in \mathbb{N} \}.
	\]
	The \emph{exterior centre} of $(L, \alpha_L)$ is the set:
	\[
	Z_{\alpha}^{\curlywedge}(L) = \{ l \in L \mid \alpha^k(l) \curlywedge x =0, ~ {\rm for ~ all} ~x \in L, k \in \mathbb{N} \}.
	\]
\end{definition}

\begin{lemma}
	For any Hom-Lie algebra $(L, \alpha_L)$ both $Z_{\alpha}^{\star}(L)$ and $Z_{\alpha}^{\curlywedge}(L)$ are ideals of $(L, \alpha_L)$ contained in $Z(L)$.
\end{lemma}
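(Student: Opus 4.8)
The plan is to detect the Lie bracket inside the tensor and exterior products by means of the canonical homomorphisms onto $(L,\alpha_L)$, to deduce from this the inclusions $Z_{\alpha}^{\star}(L) \subseteq Z(L)$ and $Z_{\alpha}^{\curlywedge}(L) \subseteq Z(L)$, and then to read off the remaining ideal conditions almost immediately.

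First I would recall that $(L,\alpha_L)$ acts on itself by the adjoint action ${}^x y = [x,y]$ of Example~\ref{ejemplo 2}~a), and that these actions are compatible, so that both $(L \star L, \alpha_{L \star L})$ and $(L \curlywedge L, \alpha_{L \curlywedge L})$ are defined. By Lemma~\ref{action-on-tensor}~a), applied with $M = N = L$, there is a homomorphism $\psi_L \colon (L \star L, \alpha_{L \star L}) \to (L, \alpha_L)$ with $\psi_L(m \star n) = -{}^n m = [m,n]$; likewise, since $L$ is an ideal of itself, the map $\theta_{L,L} \colon (L \curlywedge L, \alpha_{L \curlywedge L}) \to (L, \alpha_L)$, $m \curlywedge n \mapsto [m,n]$, constructed above is a homomorphism of Hom-Lie algebras.

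For the inclusion in the centre I would take $l \in Z_{\alpha}^{\star}(L)$. For every $x \in L$ and every $k \in \mathbb{N}$ one has $\alpha^k(l) \star x = 0$, whence $[\alpha^k(l), x] = \psi_L(\alpha^k(l) \star x) = 0$. As this holds for all $x$ and $k$, this forces $l \in Z(L)$, so $Z_{\alpha}^{\star}(L) \subseteq Z(L)$. Replacing $\psi_L$ by $\theta_{L,L}$ and $\star$ by $\curlywedge$, the identical argument would give $Z_{\alpha}^{\curlywedge}(L) \subseteq Z(L)$.

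It then remains to verify that each set is an ideal, which is formal. Both are vector subspaces, since $\star$ and $\curlywedge$ are linear in the first argument and each $\alpha^k$ is linear. They are invariant under $\alpha_L$: if $l$ lies in either centre, then $\alpha^k(\alpha_L(l)) \star x = \alpha^{k+1}(l) \star x = 0$ (and analogously with $\curlywedge$) for all $x$ and $k$, so $\alpha_L(l)$ lies in the same set. Finally, the absorption property follows at once from the inclusion in $Z(L)$: for $l$ in either centre and any $y \in L$, taking $k = 0$ in the definition of $Z(L)$ yields $[l,y] = 0$, which trivially belongs to the centre. The only step demanding genuine care is the identification of the bracket-detecting homomorphisms $\psi_L$ and $\theta_{L,L}$; once these are in place I expect no substantial obstacle, as everything else reduces to the linearity of $\star$ and $\curlywedge$ and the index shift $\alpha^k \circ \alpha_L = \alpha^{k+1}$.
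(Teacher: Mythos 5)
Your proof is correct and takes essentially the same route as the paper: both arguments detect the bracket through the canonical homomorphisms $\psi$ and $\theta_{L,L}$ (sending $m \star n \mapsto [m,n]$ and $m \curlywedge n \mapsto [m,n]$), so that membership in the tensor or exterior centre forces $[\alpha_L^k(l),x]=0$, which yields both the containment in $Z(L)$ and the ideal property. If anything, you are more explicit than the paper, which writes out only the ideal-absorption computation and leaves the containment in $Z(L)$, the subspace structure, and the $\alpha_L$-invariance implicit.
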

\begin{proof}
	Let $l \in Z_{\alpha}^{\curlywedge}(L)$ and $x \in L$. We need to prove that $[l, x] \in Z_{\alpha}^{\curlywedge}(L)$, i.e., $\alpha_L^{k}([l, x]) \curlywedge y = 0$ for all $k \in \mathbb{N}$ and $y \in L$. But this is true since we already have that $\alpha_L^{k}([l, x]) = [\alpha^k_L(l), \alpha^k_L(x)] = \theta_{L, L}\big(\alpha^k_L(l) \curlywedge \alpha^k_L(x)\big) = 0$. The same argument works for $Z_{\alpha}^{\star}(L)$ with the tensor adapted version of $\theta_{L, L}$.
\end{proof}

It is obvious that $Z_{\alpha}^{\star}(L) \subseteq Z_{\alpha}^{\curlywedge}(L)$. The equality follows whenever $(L, \alpha_L)$ is perfect by Proposition~\ref{uce exterior}.

\begin{proposition} \label{equivalence}
	Let $0 \to (S, \alpha_S) \to (F, \alpha_F) \overset{\tau} \to (L, \alpha_L) \to 0$ be a free presentation of $(L, \alpha_L)$ and let $\psi \colon (C, \alpha_C) \to (L, \alpha_L)$ be the central extension~\eqref{central extension} $\left( {\rm i.e.} ~(C, \alpha_C) = \left( \frac{F}{[F,S]}, \alpha_{F \mid}\right) \right)$. Then, there is an isomorphism of Hom-Lie algebras $\frac{[F, F]}{[F, S]} = [C, C] \cong L \curlywedge L$. Moreover, $x \in Z(C)$ if and only if $\psi(x) \in Z_{\alpha}^{\curlywedge}(L)$.
\end{proposition}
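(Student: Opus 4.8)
The plan is to treat the two assertions in turn, extracting from the first an explicit formula for the comparison isomorphism that makes the second a short computation.

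For the isomorphism, first I would observe that $[F,S] \subseteq [F,F]$, so that the commutator subalgebra of $C = F/[F,S]$ is literally $[C,C] = [F,F]/[F,S]$; this gives the stated equality. To identify this quotient with $L \curlywedge L$ I would apply Lemma~\ref{1} to the ideal $(S, \alpha_S)$ of $(F,\alpha_F)$. Since $F/S \cong L$, this produces the exact sequence
\[
(S \curlywedge F, \alpha_{S \curlywedge F}) \overset{j}{\longrightarrow} (F \curlywedge F, \alpha_{F \curlywedge F}) \overset{\tau \curlywedge \tau}{\longrightarrow} (L \curlywedge L, \alpha_{L \curlywedge L}) \to 0,
\]
so that $L \curlywedge L \cong \Coker(j)$. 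By Lemma~\ref{12} the commutator map $\theta_{F,F} \colon (F \curlywedge F, \alpha_{F \curlywedge F}) \to ([F,F], \alpha_{F \mid})$, $f \curlywedge f' \mapsto [f,f']$, is an isomorphism of Hom-Lie algebras. Under it the image of $j$, which is spanned by the elements $s \curlywedge f$ with $s \in S$ and $f \in F$, is carried onto the span of the brackets $[s,f]$, that is, onto $[F,S]$. Passing to cokernels therefore yields a Hom-Lie algebra isomorphism
\[
\Phi \colon (L \curlywedge L, \alpha_{L \curlywedge L}) \overset{\cong}{\longrightarrow} \left( \frac{[F,F]}{[F,S]}, \alpha_{F \mid} \right) = [C,C],
\]
characterised by $\Phi(\tau(f) \curlywedge \tau(f')) = [f,f'] + [F,S]$ for all $f, f' \in F$.

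The decisive point for the ``moreover'' is that $\Phi$ converts the exterior product on $L \curlywedge L$ into the bracket on $[C,C]$ along $\psi$. Indeed, writing a general pair of elements of $C$ as $x = f + [F,S]$ and $c = g + [F,S]$, we have $\psi(x) = \tau(f)$, $\psi(c) = \tau(g)$, and hence
\[
\Phi(\psi(x) \curlywedge \psi(c)) = [f,g] + [F,S] = [x,c].
\]
Since $\psi$ is a morphism of Hom-Lie algebras, $\psi \circ \alpha_C^k = \alpha_L^k \circ \psi$, so replacing $x$ by $\alpha_C^k(x)$ gives
\[
[\alpha_C^k(x), c] = \Phi\big( \alpha_L^k(\psi(x)) \curlywedge \psi(c) \big), \qquad k \in \mathbb{N}.
\]

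Finally I would run the equivalence off this identity. As $\Phi$ is injective, $[\alpha_C^k(x), c] = 0$ if and only if $\alpha_L^k(\psi(x)) \curlywedge \psi(c) = 0$. By definition $x \in Z(C)$ means $[\alpha_C^k(x), c] = 0$ for all $k \in \mathbb{N}$ and all $c \in C$, which by the above is equivalent to $\alpha_L^k(\psi(x)) \curlywedge \psi(c) = 0$ for all such $k, c$; and since $\psi$ is surjective, $\psi(c)$ runs over all of $L$, so this says precisely that $\alpha_L^k(\psi(x)) \curlywedge y = 0$ for all $k \in \mathbb{N}$, $y \in L$, i.e.\ $\psi(x) \in Z_{\alpha}^{\curlywedge}(L)$. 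I expect the only delicate step to be the bookkeeping in the first paragraph: one must verify that $\theta_{F,F}$ sends $\Ima(j)$ exactly onto $[F,S]$ and that the induced map on cokernels is a Hom-Lie isomorphism respecting the twists, so that $\Phi$ is well defined with the asserted formula. Once $\Phi$ is available, the second assertion is the brief diagram chase above.
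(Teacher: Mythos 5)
Your proposal is correct and follows essentially the same route as the paper: the isomorphism $L \curlywedge L \cong [F,F]/[F,S]$ is obtained from Lemmas~\ref{1} and~\ref{12} with exactly the formula $\tau(f) \curlywedge \tau(f') \mapsto [f,f'] + [F,S]$ used by the authors, and the ``moreover'' part is the same chain of equivalences (injectivity of the isomorphism plus surjectivity of $\psi$), which you merely spell out in more detail than the paper's terse version.
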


\begin{proof}
	The first part is a consequence of Lemmas~\ref{1} and~\ref{12}, where the isomorphism $\left( L \curlywedge L, \alpha_{L \curlywedge L} \right) \cong \left( \frac{[F, F]}{[F,S]}, \alpha_{F \mid}\right)$ is induced by the map $l_1 \curlywedge l_2 \mapsto [x, y]+[F,S]$, such that $\tau(x)=l_1, \tau(y) = l_2$.
	
	Let $x \in Z(C)$. Then,
	\[
	0 = [\alpha^k(x),y] \equiv l_1 \curlywedge l_2 = \psi \left( \alpha^k(x) \right) \curlywedge \psi(y) = \alpha^k\left( \psi(x)\right) \curlywedge \psi(y)
	\]
	hence $\psi(x) \in Z_{\alpha}^{\curlywedge}(L)$.
	
	Conversely, let $\psi(x) \in Z_{\alpha}^{\curlywedge}(L)$. Since 
	\[
	[\alpha^k(x), y] \equiv \alpha^k(l_1) \curlywedge l_2 = \alpha^k\big(\psi(x)\big) \curlywedge \psi(y) = 0,
	\]
	then $x \in Z(C)$.
\end{proof}

Proposition~\ref{equivalence} implies that $\psi\big(Z(C)\big) \subseteq Z_{\alpha}^\curlywedge(L)$. On the other hand, $\psi^{-1}(x) \subseteq Z(C)$ for any $x \in Z_{\alpha}^\curlywedge(L)$, therefore
\begin{equation} \label{identity}
	\psi\big(Z(C)\big) = Z_{\alpha}^\curlywedge(L).
\end{equation}

\begin{theorem}
	A Hom-Lie algebra $(L, \alpha_L)$ is capable if and only if $Z_{\alpha}^\curlywedge(L)=0$.
\end{theorem}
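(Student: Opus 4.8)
The plan is to reduce the whole statement to the single central extension $\psi\colon (C,\alpha_C) \to (L,\alpha_L)$ with $(C,\alpha_C) = \left(\frac{F}{[F,S]}, \alpha_{F\mid}\right)$ attached to a fixed free presentation $0 \to (S,\alpha_S)\to (F,\alpha_F)\xrightarrow{\tau}(L,\alpha_L)\to 0$, and then to exploit the identity $\psi\big(Z(C)\big) = Z_{\alpha}^{\curlywedge}(L)$ already recorded in~\eqref{identity}. Throughout I would write $N := S/[F,S] = \Ker(\psi)$ and use that~\eqref{central extension} is a \emph{central} extension, so that $N \subseteq Z(C)$.

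For the implication ``$Z_{\alpha}^{\curlywedge}(L)=0 \Rightarrow$ capable'' I would argue that $(C,\alpha_C)$ is itself the required witness. If $Z_{\alpha}^{\curlywedge}(L)=0$, then~\eqref{identity} gives $\psi\big(Z(C)\big)=0$, i.e.\ $Z(C)\subseteq \Ker(\psi)=N$; combined with the centrality inclusion $N\subseteq Z(C)$ this forces $Z(C)=N$. Hence $L \cong C/N = C/Z(C)$, so $(L,\alpha_L)$ is capable. This direction is essentially immediate once~\eqref{identity} is available.

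The substantial direction is ``capable $\Rightarrow Z_{\alpha}^{\curlywedge}(L)=0$''. Suppose $L\cong H/Z(H)$ and let $\pi\colon (H,\alpha_H)\to(L,\alpha_L)$ be the resulting central extension, so $\Ker(\pi)=Z(H)$. Since $(F,\alpha_F)$ is projective and $\pi$ is a surjection, I would lift $\tau$ to a Hom-Lie morphism $g\colon (F,\alpha_F)\to (H,\alpha_H)$ with $\pi g=\tau$. Then $g(S)\subseteq \Ker(\pi)=Z(H)$, whence $g([F,S])\subseteq [H,Z(H)]=0$, and $g$ factors as $\bar g\colon (C,\alpha_C)\to (H,\alpha_H)$ with $\pi\bar g=\psi$. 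The key step, and the main obstacle, is to prove $\bar g\big(Z(C)\big)\subseteq Z(H)$. For this I would use that surjectivity of $\psi=\pi\bar g$ forces the decomposition $H=\bar g(C)+Z(H)$: writing an arbitrary $h=\bar g(c')+z$ with $z\in Z(H)$, for $c\in Z(C)$ one computes
\[
[\alpha_H^k(\bar g(c)),h] = \bar g\big([\alpha_C^k(c),c']\big) + [\alpha_H^k(\bar g(c)),z]=0,
\]
the first bracket vanishing because $c\in Z(C)$ and the second because $z\in Z(H)$. Thus $\bar g(c)\in Z(H)=\Ker(\pi)$, and therefore $Z_{\alpha}^{\curlywedge}(L)=\psi\big(Z(C)\big)=\pi\big(\bar g(Z(C))\big)=0$, as required. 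The delicate point is precisely this decomposition argument: $\bar g$ need not be surjective, so one cannot transport centrality naively, and it is the central complement $Z(H)$ together with~\eqref{identity} that closes the gap.
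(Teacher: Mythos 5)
Your proof is correct, but the key direction is argued by a genuinely different mechanism than the paper's. The easy direction ($Z_{\alpha}^{\curlywedge}(L)=0$ implies capable) coincides with the paper's argument, both resting on identity~\eqref{identity} and the centrality of~\eqref{central extension}. For ``capable $\Rightarrow Z_{\alpha}^{\curlywedge}(L)=0$'', the paper does \emph{not} fix the free presentation in advance: it takes a free presentation $\rho\colon (F,\alpha_F)\twoheadrightarrow (G,\alpha_G)$ of the capability witness and presents $(L,\alpha_L)$ by the composite $\tau=\pi\circ\rho$, so that the induced comparison map $\widetilde{\tau}\colon (C,\alpha_C)\to (G,\alpha_G)$ is surjective by construction; the transport of centrality is then immediate, since a surjective homomorphism of Hom-Lie algebras carries $Z(C)$ into $Z(G)=\Ker(\pi)$, and \eqref{identity} finishes the proof. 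You instead keep an arbitrary fixed presentation, produce $\bar g\colon (C,\alpha_C)\to (H,\alpha_H)$ by projectivity of $(F,\alpha_F)$ --- a map which is indeed not surjective in general --- and compensate with the decomposition $H=\bar g(C)+Z(H)$, valid because $\pi\bar g=\psi$ is surjective and $\Ker(\pi)=Z(H)$. Your bracket computation is sound: the first term vanishes since $\bar g$ intertwines the twists and $c\in Z(C)$, and the second by skew-symmetry together with the $k=0$ instance of the definition of $Z(H)$. Each route buys something: the paper's choice of presentation makes the centre-transport step trivial, whereas your argument works with any presentation and is precisely the lifting-plus-decomposition technique that the paper itself employs later in the proof of Lemma~\ref{4.9} on the epicentre, so your proof in effect unifies this theorem with that lemma.
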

\begin{proof}
	Assume $Z_{\alpha}^\curlywedge(L)=0$. Let $\psi \colon (C, \alpha_C) \twoheadrightarrow (L, \alpha_L)$ be the central extension~\eqref{central extension}. It is enough to show that $\Ker(\psi) = Z(C)$. To see the non-trivial inclusion, consider any $x \in Z(C)$, then by Proposition~\ref{equivalence} we have that $\psi(x) \in Z_{\alpha}^\curlywedge(L)=0$, hence $x \in \Ker(\psi)$.
	
	Assume now that $(L, \alpha_L)$ is a capable Hom-Lie algebra, i.e., there exists a Hom-Lie algebra $(G, \alpha_G)$ such that $L \cong G/Z(G)$. Then, there is a surjective homomorphism of Hom-Lie algebras $\pi \colon (G, \alpha_G) \twoheadrightarrow (L, \alpha_L)$ such that $\Ker(\pi) = Z(G)$. Consider a diagram of free presentations as~\eqref{free present diagr}, then we have the following commutative diagram:
	\[ \xymatrix{
		0 \ar[r] & (S, \alpha_S) \ar[r] \ar@{-->}[d] & (F, \alpha_F) \ar[r]^-{\tau} \ar@{>>}[d]^{\rho} & (L, \alpha_L) \ar[r] \ar@{=}[d] & 0\\
		0 \ar[r] & Z(G) \ar[r] & (G, \alpha_G) \ar[r]^-{\pi} & (L, \alpha_L) \ar[r] & 0
	}\]
	
	Moreover $\tau([F,S]) =0$, then there exists $\widetilde{\tau} \colon \left( \frac{F}{[F,S]}, \widetilde{\alpha}_F \right) \to (G, \alpha_G)$. Let $(C, \alpha_C) = \left( \frac{F}{[F,S]}, \widetilde{\alpha}_F \right)$ and $\psi = \pi \circ \widetilde{\tau}$.

	Since $\widetilde{\tau}$ is a surjective homomorphism, then $\widetilde{\tau} \big(Z(C)\big) \subseteq Z(G) = \Ker(\pi)$, hence $\psi\big(Z(C)\big) = \pi \circ \widetilde{\tau} \big(Z(C)\big) \subseteq \pi( \Ker(\pi)) =0$, i.e., $Z \left( C \right) \subseteq \Ker(\psi)$, and consequently, $\Ker(\psi) = Z \left( C \right)$. Now applying identity~\eqref{identity} we have $Z_{\alpha}^{\curlywedge(L)} = \psi\big(Z(C)\big) = \psi\big(\Ker(\psi)\big) = 0$.
\end{proof}

\begin{definition}
	The \emph{epicentre} of a Hom-Lie algebra $(L, \alpha_L)$ is the subalgebra 
	\[
	Z_{\alpha}^{\ast}(L) = \bigcap f\big(Z(G)\big)
	\]
	for all central extension $f \colon (G, \alpha_G) \twoheadrightarrow (L, \alpha_L)$.
\end{definition}

\begin{remark}
	Note that $f\big(Z(G)\big)$ is an ideal of~$(L, \alpha_L)$, so $Z_{\alpha}^{\ast}(L)$ is also an ideal of~$(L, \alpha_L)$.
\end{remark}

\begin{lemma} \label{4.9}
	Given a free presentation $0 \to (S, \alpha_S) \to (F, \alpha_F) \overset{\tau}\to (L, \alpha_L) \to 0$, consider the central extension~\eqref{central extension}. Then,
	\[
	Z_{\alpha}^{\ast}(L) = \overline{\tau} \left(Z\bigg( \frac{F}{[F,S]}, \overline{\alpha}_F \bigg) \right)
	\]
\end{lemma}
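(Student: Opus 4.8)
The plan is to prove the two inclusions separately. The inclusion $Z_{\alpha}^{\ast}(L) \subseteq \overline{\tau}\big(Z(C)\big)$, where $(C,\overline{\alpha}_F) = \big(\frac{F}{[F,S]},\overline{\alpha}_F\big)$, is immediate: the extension~\eqref{central extension} is itself a central extension of $(L,\alpha_L)$, so it occurs among the maps $f$ over which the epicentre is defined as an intersection, whence $Z_{\alpha}^{\ast}(L) \subseteq \overline{\tau}\big(Z(C)\big)$.

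For the reverse inclusion I must show $\overline{\tau}\big(Z(C)\big) \subseteq f\big(Z(G)\big)$ for every central extension $f \colon (G,\alpha_G) \twoheadrightarrow (L,\alpha_L)$. Since $(F,\alpha_F)$ is projective and $f$ is surjective, the map $\tau$ lifts to a morphism $h \colon (F,\alpha_F) \to (G,\alpha_G)$ with $f \circ h = \tau$. For $s \in S$ we have $f\big(h(s)\big) = \tau(s) = 0$, so $h(S) \subseteq \Ker(f)$; as $f$ is central, $\Ker(f) \subseteq Z(G)$, and therefore $[h(x),h(s)] = 0$ for all $x \in F$ and $s \in S$ (using the defining condition of $Z(G)$ with $k=0$). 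Thus $h\big([F,S]\big) = 0$, and since $[F,S]$ is $\alpha_F$-invariant, $h$ factors through $C$, yielding a morphism $\overline{h} \colon (C,\overline{\alpha}_F) \to (G,\alpha_G)$ with $f \circ \overline{h} = \overline{\tau}$.

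Now fix $z \in Z(C)$; I claim $\overline{h}(z) \in Z(G)$, which gives $\overline{\tau}(z) = f\big(\overline{h}(z)\big) \in f\big(Z(G)\big)$ and, upon intersecting over all $f$, completes the proof. The subtlety is that $\overline{h}$ need not be surjective, so centrality of $z$ only controls brackets against $\Ima(\overline{h})$. To bridge this, I use that $f \circ \overline{h} = \overline{\tau}$ is surjective together with $\Ker(f) \subseteq Z(G)$ to obtain $G = \Ima(\overline{h}) + Z(G)$. Writing an arbitrary $g \in G$ as $g = \overline{h}(c) + w$ with $c \in C$ and $w \in Z(G)$, I compute, for every $k$,
\[
[\alpha_G^k(\overline{h}(z)), g] = \overline{h}\big([\overline{\alpha}_F^{\,k}(z), c]\big) + [\alpha_G^k(\overline{h}(z)), w],
\]
where the first term vanishes because $z \in Z(C)$ and $\overline{h}$ commutes with the twists, and the second vanishes because $w \in Z(G)$ forces $[w,-]=0$ and hence $[-,w]=0$. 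Therefore $\overline{h}(z) \in Z(G)$.

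The main obstacle is exactly the non-surjectivity of the lift $\overline{h}$: the decomposition $G = \Ima(\overline{h}) + Z(G)$ is the device that resolves it, since the central summand is automatically annihilated and the image summand is handled by the centrality of $z$. Everything else—projectivity of the lift, compatibility with the twists, and well-definedness of $\overline{h}$—is routine, the twist compatibility following from $h$ being a Hom-Lie morphism and $[F,S]$ being $\alpha_F$-invariant.
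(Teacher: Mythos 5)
Your proof is correct and follows essentially the same route as the paper: lift $\tau$ along the central extension via freeness of $(F,\alpha_F)$, kill $[F,S]$ to factor the lift through $\frac{F}{[F,S]}$, and then use the decomposition $G = \Ima(\overline{h}) + \Ker(f)$ with $\Ker(f)$ central to show the induced map carries $Z\big(\frac{F}{[F,S]}\big)$ into $Z(G)$. Your treatment of the decomposition is in fact slightly more careful than the paper's (which phrases it loosely as ``$H$ is the direct sum of $A$ and $L$ as vector spaces''), but the argument is the same.
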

\begin{proof}
	We only need to show that $\overline{\tau} \left(Z\left( \frac{F}{[F,S]} \right) \right) \subseteq \varphi \big(Z(H)\big)$ for any central extension $0 \to (A, \alpha_A) \to (H, \alpha_H) \overset{\varphi} \to (L, \alpha_L) \to 0$.
	
	Since $(F, \alpha_F)$ is a free Hom-Lie algebra, then there exists a homomorphism $\delta \colon (F, \alpha_F) \to (H, \alpha_H)$ such that $\varphi \circ \delta = \tau$. Moreover, $\delta(S) \subseteq A$ and $\delta([F,S]) \subseteq [A, H] =0$. Therefore, we have that $\delta$ induces a homomorphism $\overline{\delta} \colon \left( \frac{F}{[F,S]},\overline{ \alpha}_F \right) \to (H, \alpha_H)$. 
	
	Let us see that $\overline{\delta} \left( Z \left( \frac{F}{[F,S]} \right) \right) \subseteq Z(H)$. Indeed, for any $\overline{f} \in Z \left( \frac{F}{[F,S]} \right)$, we have that $[{\alpha}_F^k(\overline{f}),{f'}] \in [F,S]$, for all $f' \in F$. Since as a vector space $H$ is the direct sum of $A$ and $L$, any $h \in H$ can be written as $h = \delta(f') + a, f' \in F, a \in A$, then 
	\[
	[\overline{\delta}\big(\overline{\alpha}_F^k(\overline{f})\big), h] = \overline{\delta}([\alpha_F^k(\overline{f}), f']) + [\overline{\delta}\big(\alpha_F^k(\overline{f})\big), a] = 0
	\]
	
	Finally, $\overline{\tau} \left( Z \left( \frac{F}{[F,S]} \right) \right) = \varphi \circ \overline{\delta} \left( Z \left( \frac{F}{[F,S]} \right) \right) \subseteq \varphi\big(Z(H)\big)$.
\end{proof}

\begin{proposition}
	A Hom-Lie algebra $(L, \alpha_L)$ is capable if and only if $Z_{\alpha}^{\ast}(L)=0$.
\end{proposition}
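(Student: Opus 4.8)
The plan is to prove the proposition by showing that the epicentre coincides with the exterior centre, i.e.\ that $Z_{\alpha}^{\ast}(L) = Z_{\alpha}^{\curlywedge}(L)$, and then to quote the characterisation of capability in terms of the exterior centre that has already been established.

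First I would fix a free presentation $0 \to (S, \alpha_S) \to (F, \alpha_F) \overset{\tau}\to (L, \alpha_L) \to 0$ and pass to the associated central extension~\eqref{central extension}, writing $(C, \alpha_C) = \big( \frac{F}{[F,S]}, \overline{\alpha}_F \big)$ and denoting by $\psi = \overline{\tau} \colon (C, \alpha_C) \twoheadrightarrow (L, \alpha_L)$ the surjection induced by $\tau$ on the quotient. The crucial observation is that the map $\overline{\tau}$ appearing in Lemma~\ref{4.9} is literally this same $\psi$: both are the canonical morphism induced by $\tau$ on $\frac{F}{[F,S]}$, built from the same free presentation. Consequently Lemma~\ref{4.9} reads $Z_{\alpha}^{\ast}(L) = \psi\big(Z(C)\big)$, while identity~\eqref{identity} reads $\psi\big(Z(C)\big) = Z_{\alpha}^{\curlywedge}(L)$. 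Chaining these two equalities yields $Z_{\alpha}^{\ast}(L) = Z_{\alpha}^{\curlywedge}(L)$.

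With this identification in hand the proposition is immediate. By the Theorem characterising capability through the exterior centre, $(L, \alpha_L)$ is capable if and only if $Z_{\alpha}^{\curlywedge}(L) = 0$; by the equality just obtained this holds if and only if $Z_{\alpha}^{\ast}(L) = 0$, which is precisely the assertion to be proved.

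The only real subtlety, and the single step worth spelling out with care, is the matching of notation between Lemma~\ref{4.9} and identity~\eqref{identity}: one must verify that both statements refer to the \emph{same} central extension arising from the \emph{same} free presentation, so that the set $\psi\big(Z(C)\big)$ is genuinely identical in the two contexts. Once this is confirmed, everything reduces to substituting one equality into the other, and no further computation is needed.
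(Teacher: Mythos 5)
Your proof is correct and non-circular, but it follows a genuinely different route from the paper's. You identify the map $\overline{\tau}$ of Lemma~\ref{4.9} with the map $\psi$ of identity~\eqref{identity} --- which is legitimate, since both are the morphism induced by $\tau$ on $F/[F,S]$ in the central extension~\eqref{central extension} built from the same free presentation --- and chain the two equalities to get $Z_{\alpha}^{\ast}(L) = \psi\big(Z(C)\big) = Z_{\alpha}^{\curlywedge}(L)$, then invoke the theorem characterising capability via the exterior centre; all three ingredients precede the proposition in the paper and none of them depends on it, so there is no circularity. The paper instead argues directly: for the forward direction, capability gives a central extension $f \colon (H,\alpha_H) \twoheadrightarrow (L,\alpha_L)$ with $\Ker(f)=Z(H)$, so $Z_{\alpha}^{\ast}(L) \subseteq f\big(Z(H)\big)=0$ straight from the definition of the epicentre as an intersection; for the converse, Lemma~\ref{4.9} gives $\overline{\tau}\big(Z(F/[F,S])\big)=Z_{\alpha}^{\ast}(L)=0$, hence $Z(F/[F,S]) \subseteq \Ker(\overline{\tau}) = S/[F,S]$, while centrality of~\eqref{central extension} gives the reverse inclusion, so $L$ is exhibited explicitly as the central quotient of $F/[F,S]$ by its centre. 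Comparing the two: your argument is shorter but leans on heavier machinery (Proposition~\ref{equivalence}, identity~\eqref{identity} and the exterior-centre theorem, which themselves rest on Lemmas~\ref{1} and~\ref{12}), whereas the paper's proof needs only Lemma~\ref{4.9} and the definitions, and it produces a concrete Hom-Lie algebra witnessing capability. On the other hand, your route yields a noteworthy by-product: the equality $Z_{\alpha}^{\ast}(L) = Z_{\alpha}^{\curlywedge}(L)$ for an \emph{arbitrary} Hom-Lie algebra, a statement the paper only records (with a different, diagrammatic proof) for perfect Hom-Lie algebras in part~b) of its final theorem.
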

\begin{proof}
	If $(L, \alpha_L)$ is capable, then the central extension 
	\[
	0 \to (Z(H), \alpha_{H \mid}) \to (H, \alpha_H) \overset{f} \to (L, \alpha_L) \to 0
	\] 
	implies that $Z_{\alpha}^{\ast}(L) \subseteq f(Z(H)) =0$.
	
	Conversely, if $Z_{\alpha}^{\ast}(L)=0$, by Lemma~\ref{4.9} any free presentation 
	\[
	0 \to (S, \alpha_S) \to (F, \alpha_F) \overset{\tau}\to (L, \alpha_L) \to 0
	\]
	induces that $\overline{\tau} \left( Z \left( \frac{F}{[F,S]} \right) \right) = 0$, and therefore
	\[
	\left( Z \left( \frac{F}{[F,S]} \right) \right) \subseteq \frac{S}{[F,S]} = \Ker(\overline{\tau})
	\]	
	Moreover, since~\eqref{central extension} is a central extension, then $\frac{S}{[F,S]} \subseteq Z \left( \frac{F}{[F,S]} \right)$. Thus, 
	\[
	0 \to Z \left( \frac{F}{[F,S]}, \overline{\alpha}_F \right) \to \left( \frac{F}{[F,S]}, \overline{\alpha}_F \right) \overset{\overline{\tau}} \to (L, \alpha_L) \to 0
	\]
	is a central extension as well.
\end{proof}

\begin{theorem}
	Let $(A, \alpha_A)$ be a central ideal of a Hom-Lie algebra $(L, \alpha_L)$. Then, we have that $(A, \alpha_A) \subseteq Z_{\alpha}^{\ast}(L, \alpha_L)$ if and only if the homomorphism  in sequence~\eqref{6-term central} $G \colon \left(A \otimes \frac{G}{[G,G]}, \alpha_{\otimes} \right) \to \mathcal{M}(L, \alpha_L)$ associated to the central extension $0 \to (A, \alpha_A) \to (L, \alpha_L) \overset{\pi}\to \left( \frac{L}{A}, \overline{\alpha}_L \right) \to 0$ is the zero map.
\end{theorem}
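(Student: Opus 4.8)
The plan is to reduce both conditions to a single statement about a free presentation of $(L,\alpha_L)$, and then compare them. First I would fix a free presentation $0 \to (R,\alpha_R) \to (F,\alpha_F) \overset{\rho}\to (L,\alpha_L) \to 0$ and build the diagram~\eqref{free present diagr} with the given central extension $0 \to (A,\alpha_A) \to (L,\alpha_L) \overset{\pi}\to \left(\frac{L}{A},\overline\alpha_L\right) \to 0$ playing the role of the bottom row; here $S := \Ker(\pi\circ\rho) = \rho^{-1}(A)$, which is $\alpha_F$-invariant because $\pi\circ\rho$ is a morphism. Write $(C,\alpha_C)$ for $\left(\frac{F}{[F,R]},\overline\alpha_F\right)$ and let $\overline\rho\colon (C,\alpha_C)\to (L,\alpha_L)$ be the induced map, so that $\Ker(\overline\rho) = \frac{R}{[F,R]}$ and $\overline\rho^{-1}(A) = \frac{S}{[F,R]}$ (note $[F,R]\subseteq R\subseteq S$, so these quotients make sense). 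By the construction in~\eqref{central extension}, this extension is central, whence $\Ker(\overline\rho) = \frac{R}{[F,R]} \subseteq Z(C)$.

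Next I would unwind the map $G$. By the very construction of sequence~\eqref{6-term central}, $G$ factors as the surjection $\varphi\colon \left(A\otimes\tfrac{L}{[L,L]}, \alpha_{\otimes}\right) \twoheadrightarrow \left(\tfrac{[F,S]}{[F,R]},\widetilde\alpha_{S\mid}\right)$ followed by the inclusion $\left(\tfrac{[F,S]}{[F,R]},\widetilde\alpha_{S\mid}\right) \hookrightarrow \mathcal{M}(L,\alpha_L)$ as the kernel of $\mathcal{M}(L)\to\mathcal{M}(L/A)$. Since $\varphi$ is surjective and the second arrow is injective, $\Ima(G) = \frac{[F,S]}{[F,R]}$, so that $G = 0$ if and only if $[F,S] = [F,R]$ (the inclusion $[F,R]\subseteq[F,S]$ being automatic from $R\subseteq S$).

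Finally I would translate $[F,S] = [F,R]$ into a statement about the centre of $(C,\alpha_C)$ and feed it into Lemma~\ref{4.9}. Using that $S$ is $\alpha_F$-invariant together with the description of the centre, $\frac{S}{[F,R]}\subseteq Z(C)$ holds exactly when $[\alpha_F^k(s),f]\in[F,R]$ for all $s\in S$, $f\in F$, $k\in\mathbb{N}$, which is equivalent to $[F,S]\subseteq[F,R]$ (take $k=0$ for one implication, and use $\alpha_F^k(s)\in S$ for the other). Thus $G=0 \Leftrightarrow \frac{S}{[F,R]}\subseteq Z(C)$. On the other hand, Lemma~\ref{4.9} gives $Z_{\alpha}^{\ast}(L) = \overline\rho\big(Z(C)\big)$, and since $\Ker(\overline\rho)\subseteq Z(C)$ the centre is saturated along $\overline\rho$, i.e.\ $\overline\rho^{-1}\big(Z_{\alpha}^{\ast}(L)\big) = \overline\rho^{-1}\big(\overline\rho(Z(C))\big) = Z(C)$. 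As $\overline\rho^{-1}(A) = \frac{S}{[F,R]}$ and $\overline\rho$ is surjective, $\frac{S}{[F,R]}\subseteq Z(C)$ is equivalent to $A\subseteq Z_{\alpha}^{\ast}(L)$, which closes the chain of equivalences.

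I expect the main obstacle to be the bookkeeping in the first two steps: correctly transporting the construction of~\eqref{6-term central} and diagram~\eqref{free present diagr} to this particular central extension (whose middle term is $L$ itself, not an auxiliary algebra $G$), and pinning down $\Ima(G)$ as precisely $\frac{[F,S]}{[F,R]}$. The genuinely structural point, by contrast, is the saturation identity $\overline\rho^{-1}\big(\overline\rho(Z(C))\big)=Z(C)$, which is exactly where the centrality of the extension~\eqref{central extension}, giving $\Ker(\overline\rho)\subseteq Z(C)$, is used.
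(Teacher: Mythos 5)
Your proposal is correct and follows essentially the same route as the paper: build diagram~\eqref{free present diagr} over the extension $0 \to (A,\alpha_A) \to (L,\alpha_L) \to (L/A,\overline{\alpha}_L) \to 0$, identify $\Ima(G) \cong \frac{[F,S]}{[F,R]}$ from the construction of sequence~\eqref{6-term central}, translate $[F,S]=[F,R]$ into $\frac{S}{[F,R]} \subseteq Z\bigl(\frac{F}{[F,R]}\bigr)$, and conclude via Lemma~\ref{4.9}. Your saturation identity $\overline{\rho}^{-1}\bigl(\overline{\rho}(Z(C))\bigr)=Z(C)$ is just a cleaner packaging of the paper's commutative diagram with the quotient maps $\epsilon$ and $\gamma$ and the induced isomorphism $C/Z(C) \cong L/Z_{\alpha}^{\ast}(L)$, which rests on exactly the same centrality fact $\Ker(\overline{\rho}) \subseteq Z(C)$.
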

\begin{proof}
	With a a similar diagram to~\eqref{free present diagr} we have the free presentation $0 \to (S, \alpha_S) \to (F, \alpha_F) \overset{\pi \circ \rho} \to (\frac{L}{A}, \overline{\alpha}_L) \to 0$.
	
	We know from the construction of the exact sequence~\eqref{6-term central} that there is an isomorphism $\Ima(G) \cong \left( \frac{[F,S]}{[F,R]}, \overline{\alpha}_{F \mid} \right)$. Then, by Lemma~\ref{4.9} we have the following commutative diagram:
	\[ \xymatrix{
		& & Z \left( \frac{F}{[F,R]} \right) \ar[r] \ar@{^{(}->}[d]& Z_{\alpha}^{\ast}(L) \ar@{^{(}->}[d]& \\
		0 \ar[r] & \left( \frac{R}{[F,R]}, \overline{\alpha}_R \right) \ar[r] \ar[ur] & \left( \frac{F}{[F,R]}, \overline{\alpha}_F \right) \ar[r]^-{\overline{\tau}} \ar@{>>}[d]^-{\epsilon} & (L, \alpha_L) \ar[r] \ar@{>>}[d]^-{\gamma} & 0\\
		& & \bullet \ar[r]^-{\sim} & \left( \frac{L}{Z_{\alpha}^{\ast}(L)}, \overline{\alpha}_L \right) &
	}\]
	
	Then,
	\begin{align*}
		G=0 &\Leftrightarrow \frac{[F,S]}{[F,R]}=0 \Leftrightarrow \left(\frac{S}{[F,R]},\overline{ \alpha}_S \right) \subseteq Z \left( \frac{F}{[F,R]} \right) \\
		{}  & \Leftrightarrow\gamma \circ \overline{\tau} \left( \frac{R}{[F, R]} \right) = \epsilon \left( \frac{R}{[F,R]} \right) =0 \\
		{}  & \Leftrightarrow(A, \alpha_A ) = \tau(S, \alpha_S) = \overline{\tau} \left(\frac{S}{[F,S]}, \overline{\alpha}_S \right) \subseteq \Ker(\gamma) = Z_{\alpha}^{\ast}(L)
	\end{align*}
\end{proof}

\begin{corollary} \label{unicentral}
	For any Hom-Lie algebra $(L, \alpha_L)$, the following statements are equivalent:
	\begin{enumerate}
		\item[a)] Any central extension $f \colon (G, \alpha_G) \twoheadrightarrow (L, \alpha_L)$ satisfies that $f(Z(G), \alpha_{G \mid}) = (Z(L), \alpha_{L \mid})$.
		\item[b)] The map $G \colon \left( Z(L) \otimes \frac{G}{[G,G]}, \alpha_{\otimes} \right) \to \mathcal{M}(L, \alpha_L)$ in sequence~\eqref{6-term central} associated to the central extension $0 \to (Z(L), \alpha_{L \mid}) \to (L, \alpha_L) \to \left( \frac{L}{Z(L)}, \overline{\alpha}_L \right) \to 0$ is the zero map.
		\item[c)] The canonical homomorphism $\mathcal{M}(L, \alpha_L) \to \mathcal{M} \left( \frac{L}{Z(L)}, \overline{\alpha}_L \right)$ is injective.
	\end{enumerate}
\end{corollary}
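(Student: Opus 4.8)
The plan is to show that conditions a) and b) are each equivalent to the single equality $Z_{\alpha}^{\ast}(L) = Z(L)$, and that b) is equivalent to c) directly via the exactness of~\eqref{6-term central}; together these yield the three-way equivalence. Before starting, I would record two elementary facts. First, for any central extension $f \colon (G, \alpha_G) \twoheadrightarrow (L, \alpha_L)$ one has $f(Z(G)) \subseteq Z(L)$: if $z \in Z(G)$ and $l = f(g) \in L$, then $[\alpha_L^k(f(z)), l] = f([\alpha_G^k(z), g]) = 0$ for all $k \in \mathbb{N}$, using surjectivity of $f$. Consequently the epicentre satisfies $Z_{\alpha}^{\ast}(L) = \bigcap f(Z(G)) \subseteq Z(L)$.

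With these in hand, a) is equivalent to $Z_{\alpha}^{\ast}(L) = Z(L)$. Indeed, if a) holds then $f(Z(G)) = Z(L)$ for every central extension, so the intersection defining the epicentre equals $Z(L)$; conversely, if $Z_{\alpha}^{\ast}(L) = Z(L)$ then for each central extension the chain $Z(L) = Z_{\alpha}^{\ast}(L) \subseteq f(Z(G)) \subseteq Z(L)$ forces $f(Z(G)) = Z(L)$, which is precisely a).

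Next I would settle b) by applying the preceding theorem to the central ideal $A = Z(L)$, whose associated central extension is exactly $0 \to (Z(L), \alpha_{L \mid}) \to (L, \alpha_L) \to (\frac{L}{Z(L)}, \overline{\alpha}_L) \to 0$. That theorem asserts that $Z(L) \subseteq Z_{\alpha}^{\ast}(L)$ if and only if the corresponding map $G$ in~\eqref{6-term central} vanishes, and this vanishing is verbatim statement b). Combining with the inclusion $Z_{\alpha}^{\ast}(L) \subseteq Z(L)$ recorded above, condition b) becomes equivalent to $Z_{\alpha}^{\ast}(L) = Z(L)$, hence to a).

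Finally, b) $\Leftrightarrow$ c) is read off from the exactness of~\eqref{6-term central} for the central extension $0 \to Z(L) \to L \to \frac{L}{Z(L)} \to 0$, whose relevant segment is $Z(L) \otimes \frac{L}{[L,L]} \overset{G}\to \mathcal{M}(L) \to \mathcal{M}(\frac{L}{Z(L)})$. Exactness at $\mathcal{M}(L)$ identifies $\Ima(G)$ with the kernel of the canonical map $\mathcal{M}(L) \to \mathcal{M}(\frac{L}{Z(L)})$, so this map is injective precisely when $\Ima(G) = 0$, i.e.\ when $G = 0$. I expect the only genuinely delicate point to be the bookkeeping of inclusions in the first two steps: the theorem supplies only one inclusion, $Z(L) \subseteq Z_{\alpha}^{\ast}(L)$, and the reverse inclusion $Z_{\alpha}^{\ast}(L) \subseteq Z(L)$ needed to upgrade it to an equality is the general fact established at the outset. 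Everything else is formal manipulation of the epicentre and the exact sequence.
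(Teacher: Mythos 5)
Your proof is correct and takes essentially the same route as the paper's: the paper's one-line proof (``a direct consequence of the exactness of sequence~\eqref{6-term central}'') implicitly leans on the preceding theorem applied to the central ideal $A = Z(L)$, which is precisely how you establish a) $\Leftrightarrow$ b), while b) $\Leftrightarrow$ c) is read off from exactness at $\mathcal{M}(L, \alpha_L)$. Your explicit observations that $f\big(Z(G)\big) \subseteq Z(L)$ for any central extension (hence $Z_{\alpha}^{\ast}(L) \subseteq Z(L)$) and that a) is equivalent to $Z_{\alpha}^{\ast}(L) = Z(L)$ simply fill in bookkeeping that the paper leaves to the reader.
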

\begin{proof}
	It is a direct consequence of the exactness of sequence~\eqref{6-term central} associated to the central extension $0 \to (Z(L), \alpha_{L \mid}) \to (L, \alpha_L) \to \left( \frac{L}{Z(L)}, \overline{\alpha}_L \right) \to 0$.
\end{proof}

\begin{remark}
	If $(L, \alpha_L)$ satisfies any of the equivalent statements of Corollary~\ref{unicentral}, then sequence~\eqref{6-term central} associated to the central extension $(L, \alpha_L) \to \left( \frac{L}{Z(L)}, \overline{\alpha}_L \right)$ implies that 
	\[
	\mathcal{M} (L, \alpha_L) = \Ker\left( \mathcal{M}\left( \frac{L}{Z(L)}, \overline{\alpha}_L \right) \to Z(L, \alpha_L) \right).
	\]
\end{remark}

\begin{lemma} \label{2}
	Let $0 \to (N, \alpha_N) \to (L, \alpha_L) \overset{\pi}\to (\frac{L}{N}, \overline{\alpha}_L) \to 0$ be an exact sequence of Hom-Lie algebras. If $(L, \alpha_L)$ is perfect, then
	\[
	\Ker \left( \HH_2^{\alpha}(L, \alpha_L) \to \HH_2^{\alpha} \left( \frac{L}{N}, \overline{\alpha}_L \right) \right) = \Ker \left( \theta_{N, L} \colon (N \curlywedge L, \alpha_{N \curlywedge L}) \to (N, \alpha_N) \right)
	\]
\end{lemma}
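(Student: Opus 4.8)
The plan is to compare the two kernels through the natural commutative ladder that relates the exterior products to the given short exact sequence. First I would record that, since $(L,\alpha_L)$ is perfect, so is $\left(\frac{L}{N},\overline{\alpha}_L\right)$: indeed $\left[\frac{L}{N},\frac{L}{N}\right]$ is the image under $\pi$ of $[L,L]=L$. Hence Proposition~\ref{uce exterior} applies to both algebras and gives the identifications $\HH_2^{\alpha}(L,\alpha_L)=\Ker(\theta_{L,L})$ and $\HH_2^{\alpha}\left(\frac{L}{N},\overline{\alpha}_L\right)=\Ker(\theta_{L/N,L/N})$, with the induced map on $\HH_2^{\alpha}$ being the restriction of $\pi\curlywedge\pi$.

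Next I would assemble the commutative diagram whose top row is the exact sequence of Lemma~\ref{1},
\[
(N\curlywedge L,\alpha_{N\curlywedge L})\xrightarrow{\ \iota\ }(L\curlywedge L,\alpha_{L\curlywedge L})\xrightarrow{\ \pi\curlywedge\pi\ }\Big(\tfrac{L}{N}\curlywedge\tfrac{L}{N},\overline{\alpha}_{L\curlywedge L}\Big)\to 0,
\]
and whose bottom row is $0\to (N,\alpha_N)\xrightarrow{\ j\ }(L,\alpha_L)\xrightarrow{\ \pi\ }\left(\frac{L}{N},\overline{\alpha}_L\right)\to 0$, with vertical maps $\theta_{N,L}$, $\theta_{L,L}$ and $\theta_{L/N,L/N}$. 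Commutativity is the identity $\theta_{L,L}(\iota(m\curlywedge \ell))=[m,\ell]=j(\theta_{N,L}(m\curlywedge\ell))$ on the left square and $\pi([\ell,\ell'])=[\pi(\ell),\pi(\ell')]$ on the right square.

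Then comes the diagram chase. An element of $\Ker\big(\HH_2^{\alpha}(L)\to\HH_2^{\alpha}(L/N)\big)$ is a $v\in\Ker(\theta_{L,L})$ with $(\pi\curlywedge\pi)(v)=0$; by exactness of the top row $\Ker(\pi\curlywedge\pi)=\Ima(\iota)$, so this kernel equals $\Ker(\theta_{L,L})\cap\Ima(\iota)$. Writing $v=\iota(u)$ and using $j\circ\theta_{N,L}=\theta_{L,L}\circ\iota$ together with the injectivity of $j$, one gets $\theta_{N,L}(u)=0$; conversely $\iota$ carries $\Ker(\theta_{N,L})$ into $\Ker(\theta_{L,L})\cap\Ima(\iota)$. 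Thus $\iota$ induces a surjection $\Ker(\theta_{N,L})\twoheadrightarrow\Ker\big(\HH_2^{\alpha}(L)\to\HH_2^{\alpha}(L/N)\big)$.

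The main obstacle is to upgrade this surjection to the asserted equality, i.e.\ to show that $\iota$ is injective on $\Ker(\theta_{N,L})$. Since $\Ker(\iota)\subseteq\Ker(\theta_{N,L})$ (again because $j$ is a monomorphism), this amounts to the injectivity of $\iota\colon N\curlywedge L\to L\curlywedge L$, and here Lemma~\ref{1} only furnishes exactness at $L\curlywedge L$, not at $N\curlywedge L$. I expect to settle this by passing to a free presentation $0\to(S,\alpha_S)\to(F,\alpha_F)\xrightarrow{\tau}(L,\alpha_L)\to 0$ and establishing the Hopf-type identification $N\curlywedge L\cong\left(\frac{[F,\widetilde{N}]}{[F,S]},\alpha_{F\mid}\right)$, where $\widetilde{N}=\tau^{-1}(N)$; this is the relative analogue of Lemma~\ref{12} and the text's formula $L\curlywedge L\cong\left(\frac{[F,F]}{[F,S]},\alpha_{F\mid}\right)$, the point being that the defining relations contribute only $[F,S]$ since $[\widetilde{N},S]\subseteq[F,S]$. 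Under this model $\iota$ becomes the inclusion $\frac{[F,\widetilde{N}]}{[F,S]}\hookrightarrow\frac{[F,F]}{[F,S]}$, which is manifestly injective, while $\theta_{N,L}$ and $\theta_{L,L}$ are the maps induced by $[F,\widetilde{N}]\hookrightarrow\widetilde{N}$ and $[F,F]\hookrightarrow F$. Computing both kernels in this presentation yields $\left(\frac{S\cap[F,\widetilde{N}]}{[F,S]},\alpha_{F\mid}\right)$ on each side, which proves the equality directly and simultaneously confirms the injectivity needed above.
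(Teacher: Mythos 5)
Your first three paragraphs are, in substance, the paper's own proof. The paper forms exactly the ladder you describe --- the exact row of Lemma~\ref{1} on top, the given extension on the bottom, with $\theta_{N,L}$, $\theta_{L,L}$, $\theta_{L/N,L/N}$ vertical --- and then simply invokes the Snake lemma, which is available because $\HomLie$ is semi-abelian; your explicit chase (using perfectness of $L$ and $L/N$ and Proposition~\ref{uce exterior} to identify $\HH_2^{\alpha}$ with $\Ker\theta$) is the element-wise rendering of that lemma, and it correctly establishes
\[
\iota\big(\Ker(\theta_{N,L})\big)=\Ker(\theta_{L,L})\cap\Ima(\iota)=\Ker\Big(\HH_2^{\alpha}(L,\alpha_L)\to\HH_2^{\alpha}\big(\tfrac{L}{N},\overline{\alpha}_L\big)\Big).
\]

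Your fourth paragraph, however, is not a proof but a promissory note, and that is where the gap lies. The ``relative Hopf formula'' $N\curlywedge L\cong\big(\tfrac{[F,\widetilde{N}]}{[F,S]},\alpha_{F\mid}\big)$ is asserted, not established, and it does not follow from anything in the paper: Lemma~\ref{12} only treats $F\curlywedge F\cong[F,F]$, and Lemma~\ref{1} only gives the ``diagonal'' right-exact sequence. To carry out your plan you would need a relative version of Lemma~\ref{12} (that $\widetilde{N}\curlywedge F\cong[\widetilde{N},F]$ for an ideal of a free Hom-Lie algebra) together with a right-exactness sequence for ${-}\curlywedge F$ in the first variable; neither is routine in the Hom-Lie setting (in the Lie case both require real work and go back to Ellis), so as written this step fails to close. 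That said, you should also note what the extra step is for: the two kernels in the statement live in different ambient objects, so the stated ``$=$'' can only be read as the image/exactness statement displayed above --- which is precisely what the Snake lemma yields, is all the paper's own proof establishes (the Snake lemma never gives injectivity of $\Ker(\theta_{N,L})\to\Ker(\theta_{L,L})$), and is all that is used later (the final theorem only needs exactness of $N\curlywedge L\to\HH_2^{\alpha}(L,\alpha_L)\to\HH_2^{\alpha}(L/N,\overline{\alpha}_L)$ for a central ideal, where $\Ker(\theta_{N,L})=N\curlywedge L$). So your argument is already complete after the third paragraph; the fourth paragraph aims at a strictly stronger isomorphism statement and, in its current form, is an unproven claim rather than a proof.
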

\begin{proof}
	First observe that $(\frac{L}{N}, \overline{\alpha}_L)$ is also a perfect Hom-Lie algebra. Then, consider the following diagram of exact rows:
	\[ \xymatrix{
		& (N \curlywedge L, \alpha_{N \curlywedge L}) \ar[r] \ar[d]^-{\theta_{N, L}} & (L \curlywedge L, \alpha_{L \curlywedge L}) \ar[r]^-{\pi \curlywedge \pi} \ar[d]^-{\theta_{L, L}} & (\frac{L}{N} \curlywedge \frac{L}{N}, \overline{\alpha}_{L \curlywedge L}) \ar[r] \ar[d]^-{\theta_{L/N, L/N}}& 0 \\
		0 \ar[r] & (N, \alpha_N) \ar[r] & (L, \alpha_L) \ar[r]^-{\pi} & (\frac{L}{N}, \overline{\alpha}_L) \ar[r] & 0
	} \]
	and apply the Snake lemma.
\end{proof}

\begin{remark}
	Note that if $(N, \alpha_N)$ is a central ideal in $(L, \alpha_L)$, then 
	\[
	\Ker \big( \theta_{N, L} \colon (N \curlywedge L, \alpha_{N \curlywedge L}) \to (N, \alpha_N) \big) = (N \curlywedge L, \alpha_{N \curlywedge L}).
	\]
\end{remark}

\begin{theorem}
	For any perfect Hom-Lie algebra $(L, \alpha_L)$, the following statements hold:
	\begin{enumerate}
		\item[a)] $Z_{\alpha}^{\curlywedge}(L)$ is the smallest central ideal of $(L, \alpha_L)$ containing all the central ideals~$(N, \alpha_N)$ for which the canonical morphism $\mathcal{M} (L, \alpha_L) \to \mathcal{M} \left( \frac{L}{N}, \overline{\alpha}_L \right)$ is a monomorphism (or, equivalently, for which the canonical surjective homomorphism ${\pi \curlywedge \pi} \colon (L \curlywedge L, \alpha_{L \curlywedge L}) \to (\frac{L}{N} \curlywedge \frac{L}{N}, \overline{\alpha}_{L \curlywedge L})$ is an isomorphism).
		\item[b)] $Z_{\alpha}^{\curlywedge}(L) = Z_{\alpha}^{\ast}(L)$.
	\end{enumerate}
\end{theorem}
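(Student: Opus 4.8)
The plan is to handle the two assertions separately, disposing of b) almost immediately from results already in place and concentrating on a).

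For part b) I would simply superimpose Lemma~\ref{4.9} and the identity~\eqref{identity}, observing that both concern the \emph{same} central extension coming from a free presentation, namely $\overline{\tau} = \psi \colon \big(C,\alpha_C\big)=\big(\tfrac{F}{[F,S]},\overline{\alpha}_F\big)\to (L,\alpha_L)$. Lemma~\ref{4.9} yields $Z_{\alpha}^{\ast}(L)=\overline{\tau}\big(Z(C)\big)$, whereas~\eqref{identity} yields $\psi\big(Z(C)\big)=Z_{\alpha}^{\curlywedge}(L)$. Since $\overline{\tau}=\psi$, the two right-hand sides agree and $Z_{\alpha}^{\curlywedge}(L)=Z_{\alpha}^{\ast}(L)$. (This argument never invokes perfectness; perfectness is needed only for part a).)

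For part a) I would first note that a quotient $\big(\tfrac{L}{N},\overline{\alpha}_L\big)$ of a perfect algebra is again perfect, so for every central ideal $(N,\alpha_N)$ the Schur multiplier may be traded for second homology: the expression~\eqref{multiplier} and the Hopf-type formula $\HH_2^{\alpha}\cong\tfrac{S\cap[F,F]}{[F,S]}$ coincide for perfect algebras, giving $\mathcal{M}(L,\alpha_L)\cong\HH_2^{\alpha}(L,\alpha_L)$ and $\mathcal{M}\big(\tfrac{L}{N},\overline{\alpha}_L\big)\cong\HH_2^{\alpha}\big(\tfrac{L}{N},\overline{\alpha}_L\big)$. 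Under these identifications the canonical map $\mathcal{M}(L)\to\mathcal{M}\big(\tfrac{L}{N}\big)$ becomes the restriction to $\HH_2^{\alpha}(L)=\Ker(\theta_{L,L})$ of $\pi\curlywedge\pi$ (Proposition~\ref{uce exterior} supplies $\Ker(\theta_{L,L})\cong\HH_2^{\alpha}(L)$). By Lemma~\ref{1} the kernel of $\pi\curlywedge\pi\colon (L\curlywedge L)\to\big(\tfrac{L}{N}\curlywedge\tfrac{L}{N}\big)$ is the image $J$ of the natural map $(N\curlywedge L)\to(L\curlywedge L)$; since $N$ is central, $\theta_{L,L}(n\curlywedge x)=[n,x]=0$, so $J\subseteq\Ker(\theta_{L,L})=\HH_2^{\alpha}(L)$. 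Hence $\pi\curlywedge\pi$ is an isomorphism $\iff J=0\iff \mathcal{M}(L)\to\mathcal{M}\big(\tfrac{L}{N}\big)$ is injective, which settles the parenthetical equivalence of the two conditions on $N$.

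Next I would identify $J=0$ with the inclusion $N\subseteq Z_{\alpha}^{\curlywedge}(L)$. The image $J$ is spanned by the elements $n\curlywedge x\in L\curlywedge L$ with $n\in N$, $x\in L$, so $J=0$ exactly when $n\curlywedge x=0$ in $L\curlywedge L$ for all such $n,x$. As $(N,\alpha_N)$ is $\alpha_L$-invariant, $\alpha_L^k(n)\in N$ for every $k$, so this vanishing is equivalent to $\alpha_L^k(n)\curlywedge x=0$ for all $n\in N$, $x\in L$, $k\in\mathbb{N}$, i.e.\ to $N\subseteq Z_{\alpha}^{\curlywedge}(L)$. Thus the central ideals enjoying the stated property are precisely those contained in $Z_{\alpha}^{\curlywedge}(L)$. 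Finally, $Z_{\alpha}^{\curlywedge}(L)$ is itself a central ideal (it sits inside $Z(L)$ by the lemma establishing that $Z_{\alpha}^{\star}$ and $Z_{\alpha}^{\curlywedge}$ are central ideals) and trivially satisfies $Z_{\alpha}^{\curlywedge}(L)\subseteq Z_{\alpha}^{\curlywedge}(L)$, so it is a member of this family; being both an upper bound for and a member of the family, it is the smallest central ideal containing all of them, which proves a).

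The step I expect to be the main obstacle is the bookkeeping in the third paragraph together with the passage through Lemma~\ref{1}: one must use Lemma~\ref{1} to rewrite $\Ker(\pi\curlywedge\pi)$ as the image $J$, confirm that $J$ is spanned by the generators $n\curlywedge x$ so that its triviality is a pointwise condition, and check that centrality of $N$ forces $J\subseteq\HH_2^{\alpha}(L)$, so that the homology map and the full $\pi\curlywedge\pi$ share the same kernel. Once $\mathcal{M}\cong\HH_2^{\alpha}$ for perfect algebras is granted, the remaining deductions are formal.
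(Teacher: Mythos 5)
Your proposal is correct, but it splits from the paper in an interesting way: part a) is essentially the paper's argument, while part b) takes a genuinely different and in fact more general route. For a), the paper's proof is a commutative diagram built from Lemmas~\ref{1} and~\ref{2}: the image of $(N \curlywedge L, \alpha_{N \curlywedge L})$ in $(L \curlywedge L, \alpha_{L \curlywedge L})$ lies inside $\HH_2^{\alpha}(L,\alpha_L) = \Ker(\theta_{L,L})$ because $N$ is central, so the kernel of $\mathcal{M}(L,\alpha_L) \to \mathcal{M}\bigl(\tfrac{L}{N},\overline{\alpha}_L\bigr)$ coincides with $\Ker(\pi \curlywedge \pi)$; your $J$ is exactly this image, and the naturality bookkeeping you flag as the main obstacle is precisely what the paper packages as Lemma~\ref{2} (via the Snake lemma) together with the remark that $\Ker(\theta_{N,L}) = N \curlywedge L$ for central $N$ --- so here you are re-deriving rather than citing, but the content is identical. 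For b), by contrast, the paper does \emph{not} argue as you do: it obtains the equality as a consequence of Corollary~\ref{unicentral} and the part-a) diagram with the central ideal $(N,\alpha_N)$ taken to be the epicentre, so its proof passes through the six-term sequence~\eqref{6-term central} and depends on both perfectness and part a). Your proof instead superimposes Lemma~\ref{4.9} and identity~\eqref{identity}, which concern the same central extension~\eqref{central extension} arising from a free presentation, so that $Z_{\alpha}^{\ast}(L) = \overline{\tau}\bigl(Z(C)\bigr) = \psi\bigl(Z(C)\bigr) = Z_{\alpha}^{\curlywedge}(L)$ immediately. This is valid (neither Lemma~\ref{4.9} nor identity~\eqref{identity} assumes perfectness), it is independent of part a), and, as you observe, it establishes the stronger statement that the exterior centre equals the epicentre for \emph{every} Hom-Lie algebra. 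What the paper's route buys is coherence with its homological machinery --- b) appears as a corollary of the characterisation in a); what your route buys is economy and generality.
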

\begin{proof}
	The first part is obtained from the following commutative diagram constructed for any central ideal $(N, \alpha_N)$ and using Lemmas~\ref{1} and~\ref{2}:
	%	For any central ideal $(N, \alpha_N)$ we can use Lemmas~\ref{1} and~\ref{2} to construct the following commutative diagram:
	\[ \xymatrix{
		(N \curlywedge L, \alpha_{N \curlywedge L}) \ar[r] \ar@{=}[d] & \HH_2^{\alpha}(L, \alpha_L) \ar[r] \ar@{>->}[d] & \HH_2^{\alpha} \left(\frac{L}{N}, \overline{\alpha}_L \right) \ar@{>->}[d] & \\
		(N \curlywedge L, \alpha_{N \curlywedge L}) \ar[r] & (L \curlywedge L, \alpha_{L \curlywedge L}) \ar[r] \ar@{>>}[d] & \left(\frac{L}{N} \curlywedge \frac{L}{N}, \overline{\alpha}_{L \curlywedge L} \right) \ar@{>>}[d] \ar[r]& 0\\
		& (L, \alpha_L) \ar@{>>}[r]^-{\pi} & \left(\frac{L}{N}, \overline{\alpha}_L \right) &
	} \]
	
	The second part is a direct consequence of Corollary~\ref{unicentral} and the previous diagram where the central ideal $(N, \alpha_N)$ is the epicentre of $(L, \alpha_L)$.
\end{proof}

\section*{Acknowledgements}
The authors would like to thank the referee for his helpful comments and suggestions that improved the manuscript.

%\bibliography{biblio_xabi}
%\bibliographystyle{amsplain-nodash}

\end{document}